\tikzset{join/.code=\tikzset{after node path={%
\ifx\tikzchainprevious\pgfutil@empty\else(\tikzchainprevious)%
edge[every join]#1(\tikzchaincurrent)\fi}}}
\tikzset{>=stealth',every on chain/.append style={join},
         every join/.style={->}}
\newtheorem{definition}{Definition}[section]
\newtheorem{theorem}[definition]{Theorem}
\newtheorem{lemma}[definition]{Lemma}
\newtheorem{proposition}[definition]{Proposition}
\theoremstyle{definition}
\newtheorem{remark}[definition]{Remark}
\newtheorem{example}[definition]{Example}
\newcommand{\noi}{\noindent}
\newcommand{\F}{\mathcal{F}}
\newcommand{\ep}{\varepsilon}
\newcommand{\ra}{\rightarrow}
\begin{document}

\title[Fixed Points and Continuity of Semihypergroup Actions]{Fixed Points and Continuity of Semihypergroup Actions}

\author[C.~Bandyopadhyay]{Choiti Bandyopadhyay}

\address{Department of Mathematical and Statistical Sciences, University of Alberta, Canada}
\address{Current Address: Department of Mathematics, Indian Institute of Technology Kanpur, India}

\email{choiti@ualberta.ca, choitib@iitk.ac.in}

\thanks{Part of this work is included in the author's PhD thesis at the University of Alberta}
\keywords{semihypergroup, action, amenability, fixed points, almost periodic functions, invariant mean, hypergroup, coset spaces, orbit spaces}
\subjclass[2020]{Primary 43A07, 43A62, 43A65, 47H10; Secondary 43A60, 43A85, 43A99, 46G12,  46E27}

\begin{abstract}
In a couple of previous papers \cite{CB1, CB2}, we initiated a systematic study of semihypergroups and had a thorough discussion on certain analytic and algebraic aspects associated to this class of objects. In this article, we introduce and examine (separately) continuous actions on the category of semihypergroups. In particular, we discuss the continuity properties of such actions and explore the equivalence relations between different fixed-point properties of certain actions and the existence of  left-invariant mean(s) on the space of almost periodic functions on a semihypergroup.
\end{abstract}

\maketitle

\section{Introduction}
\label{intro}

\quad In the classical setting of topological semigroups and groups, fixed-point properties of their respective representations on certain subsets of a Banach space  or  a locally convex topological space, have been a prominent area of research since its inception. In particular, the amenability properties of a (semi)topological semigroup are  intrinsically  related to the existence of  fixed points of certain actions of the semigroup on certain spaces, and in fact, one can completely  characterize amenability of different function-spaces of a semigroup using such fixed-point properties (see \cite{DA, LT, LZ, MIT, PA, RI} for example, among others). In this article, we initiate the study of semihypergroup actions  and pursue similar results in the more general setting of semihypergroups, in order to thereby realise where and why this theory deviates from the classical theory of locally compact semigroups. 

\quad A semihypergroup, as one would expect, can be perceived simply as a generalization of a locally compact semigroup, where the product of two points is a certain compact set, rather than a single point. In a nutshell, a semihypergroup is essentially a locally compact topological space where the measure space is equipped with a certain convolution product, turning it into an associative algebra (which unlike hypergroups, need not have an identity or an involution).

\quad The concept of semihypergroups, first introduced as `semiconvos' by Jewett\cite{JE}, arises naturally in abstract harmonic analysis in terms of left(right) coset spaces of locally compact groups, and orbit spaces of affine group-actions which appear frequently in different areas of research in mathematics, including Lie groups, coset theory, homogeneous spaces, dynamical systems, and  ordinary and partial differential equations, to name a few. These objects arising from locally compact groups, although retain some interesting structures from the parent category, often fail to be a topological group, semigroup or even a hypergroup (see \cite{CB1,JE} for detailed reasons). The fact that semihypergroups admit a broader range of examples arising from different fields of research compared to classical semigroup, group and hypergroup theory, and yet sustains enough structure to allow an independent theory to develop, makes it an intriguing and useful area of study with essentially a broader range of applications.
 

\quad However, unlike hypergroups, the category of (topological) semihypergroups still lack an extensive  systematic theory on it. In a couple of previous papers \cite{CB1, CB2} we initiated developing a systematic theory on semihypergroups. Our approach towards it has been to define and explore some basic algebraic and analytic aspects and areas on semihypergroups, which are imperative for further development of any category of analytic objects in general. In the first two installments of the study, we introduced and developed the concepts and theories of homomorphisms, ideals, kernels, almost periodic and weakly almost periodic functions and free products on a semihypergroup. 

\quad In this article, we advance the theory by introducing and investigating actions on the category of semihypergroups. In particular, we introduce and discuss the relation between several possible natural  definitions of semihypergroup actions, investigate their continuity properties, and finally proceed to examine the equivalence between amenability properties on the space of almost periodic functions of a semihypergroup, and the existence of certain natural fixed point properties of semihypergroup actions. We see that the classical equivalences in the category of semigroups,  do not always extend to semihypergroups for a number of obstacles and differences in the product-structure of these two classes of objects, as discussed before. For example, unlike topological semigroups, the spaces of almost periodic functions on a semihypergroup do not necessarily form an algebra. The rest of the article is organized as the following.

\quad In the next, \textit{i.e.}, second section of this article, we recall some preliminary definitions and notations given by Jewett in \cite{JE}, and introduce some new definitions required for further development. We conclude the section with listing some important examples of semihypergroups and hypergroups. 

\quad In the third section, we initiate the study of semihypergroup actions. We first introduce a 
 `\textit{general action}' of a semihypergroup $K$ on a Hausdorff topological space $X$, and investigate  when separate continuity of  such an  action forces joint continuity on the underlying product space. Later we introduce a couple of natural definitions of semihypergroup actions on a locally convex space  and discuss their structural equivalence, as well as their relation to general semihypergroup actions,  and the immediate implications to the continuity conditions.  
 
\quad In the final, \textit{i.e,} fourth section of the article, we investigate the existence and equivalence-relations between two different fixed-point properties and amenability on the space $AP(K)$ of almost periodic functions of a (semitoplogical) semihypergroup. We first show that any continuous affine action of a commutative semihypergroup must have a fixed point. On the other hand, we see that for any semihypergroup $K$, if any continuous affine action of $K$ on a compact convex space has a fixed point, then $AP(K)$ must admit a left-invariant mean. Next we discuss the existing Arens product structure \cite{AR, CB1} on $AP(K)^*$, and thereby provide some necessary and sufficient conditions for $K$ to admit a left invariant mean on $AP(K)$. Now finally, we pursue similar equivalence conditions in terms of the measure algebra $M(K)$ of $K$. We consider a different natural fixed point property in terms of $M(K)$, and thereby acquire a complete characterization of amenability properties on  $AP(K)$.  




\section{Preliminary}
\label{Preliminary}

\noi We first list a preliminary set of notations and definitions that we will use throughout the text. All the topologies throughout this text are assumed to be Hausdorff.


\noi For any locally compact Hausdorff topological space $X$, we denote by $M(X)$ the space of all regular complex Borel measures on $X$, where $ M_F^+(X), M^+(X)$ and $P(X)$ respectively denote the subsets of $M(X)$ consisting of all non-negative measures with finite support, all finite non-negative regular Borel measures  and all probability measures on $X$. For any measure $\mu$ on $X$, we denote by $supp(\mu)$ the support of the measure $\mu$. Moreover, $ B(X), C(X), C_c(X)$ and $C_0(X)$ denote the function spaces of all bounded functions, bounded continuous functions, compactly supported continuous functions and continuous functions vanishing at infinity on $X$ respectively.


\noi Unless mentioned otherwise, the space $M^+(X)$ is equipped with the \textit{cone topology}   \cite{JE}, \textit{i.e,} the weak topology on $M^+(X)$ induced by $ C_c^+(X)\cup \{\chi_{_X}\}$. We denote  the set of all compact subsets of $X$ by $\mathfrak{C}(X)$, and consider the natural \textit{Michael topology} \cite{MT} on it, which makes it into a locally compact Hausdorff space.  For any element $x\in X$, we denote by $p_x$ the point-mass measure or the Dirac measure at the point $ x $.


\noi For any three locally compact Hausdorff spaces $X, Y, Z$, a bilinear map $\Psi : M(X) \times M(Y) \rightarrow M(Z)$ is called \textit{positive continuous} if the following properties hold true.
\begin{enumerate}
\item $\Psi(\mu, \nu) \in M^+(Z)$ whenever $(\mu, \nu) \in M^+(X) \times M^+(Y)$.
\item The map $\Psi |_{M^+(X) \times M^+(Y)}$ is continuous.
\end{enumerate}


\noi Now we state the formal definition for a (topological) semihypergroup. Note that we follow Jewett's notion \cite{JE} in terms of the definitions and notations, in most cases.


\begin{definition}[Semihypergroup]\label{shyper} A pair $(K,*)$ is called a (topological) semihypergroup if they satisfy the following properties:


\begin{description} 
\item[(A1)] $K$ is a locally compact Hausdorff space and $*$ defines a binary operation on $M(K)$ such that $(M(K), *)$ becomes an associative algebra.

\item[(A2)] The bilinear mapping $* : M(K) \times M(K) \rightarrow M(K)$ is positive continuous.

\item[(A3)] For any $x, y \in K$ the measure $(p_x * p_y)$ is a probability measure with compact support.

\item[(A4)] The map $(x, y) \mapsto \mbox{ supp}(p_x * p_y)$ from $K\times K$ into $\mathfrak{C}(K)$ is continuous.
\end{description}
\end{definition}


\noi Note that for any $A,B \subset K$ the convolution of subsets is defined as the following:
$$A*B := \cup_{x\in A, y\in B} \ supp(p_x*p_y)  .$$

\noi  We define the concepts of left (resp. right) topological and semitopological semihypergroups, in accordance with similar concepts in the classical semigroup theory.


\begin{definition}
A pair $(K, *)$ is called a left (resp. right) topological semihypergroup if it satisfies all the conditions of Definition \ref{shyper}, with property {($A2$)} replaced by property {($A2 '$)} (resp. property {($A2 ''$)}), given as the following:

\begin{description}
\item[(A2$'$)] The map $(\mu, \nu) \mapsto \mu*\nu$ is positive and for each $\omega \in M^+(K)$ the map\\ $L_\omega:M^+(K) \rightarrow M^+(K)$ given by $L_\omega(\mu)= \omega*\mu$ is continuous.\\
\item[(A2$''$)] The map $(\mu, \nu) \mapsto \mu*\nu$ is positive and for each $\omega \in M^+(K)$ the map\\ $R_\omega:M^+(K) \rightarrow M^+(K)$ given by $R_\omega(\mu)= \mu*\omega$ is continuous.
\end{description}
\end{definition}


A pair $(K, *)$ is called a \textit{semitopological semihypergroup} if it is both left and right topological semihypergroup, \textit{i.e}, if the convolution $*$ on $M(K)$ is only separately continuous.  

For any Borel measurable function $f$ on a (semitopological) semihypergroup $K$ and each $x, y \in K$, we define the left translate $L_xf$ of $f$ by $x$ (resp. the right translate $R_yf$ of $f$ by $y$) as
$$ L_xf(y) = R_yf(x) = f(x*y) := \int_K f \ d(p_x*p_y)\ .$$

Unless mentioned otherwise, we will always assume the uniform (supremum) norm $||\cdot ||_u$ on $C(K)$ and $B(K)$. We denote by $\mathcal{B}_1$ the closed unit ball of $C(K)^*$. Similarly, for any linear subspace $\F$ of $C(K)$, we denote the closed unit ball of $\F^*$ as $\mathcal{B}_1(\F^*):= \{\omega \in \F^*: ||\omega|| \leq 1\}$. Moreover, $\F$ is called left (resp. right) translation-invariant if $L_xf\in \F$ (resp. $R_xf\in \F$) for each $x\in K, f\in \F$. We simply say that $\F$ is translation-invariant, if it is both left and right translation-invariant.

A function $f\in C(K)$ is called left (resp. right) uniformly continuous if the map $x\mapsto L_xf$ (resp. $x\mapsto R_xf$) from $K$ to $(C(K), ||\cdot ||_u)$ is continuous. We say that $f$ is \textit{uniformly continuous} if it is both left and right uniformly continuous. The space consisting of  all such functions is denoted by $UC(K)$, which forms a norm-closed linear subspace of $C(K)$. 

The left (resp. right) orbit of a function $f\in C(K)$, denoted as $\mathcal{O}_l(f)$ (resp. $\mathcal{O}_r(f)$), is defined as $\mathcal{O}_l(f) := \{L_xf : x\in K\}$ (resp. $\mathcal{O}_r(f) := \{R_xf : x\in K\}$). A function $f\in C(K)$ is called left (resp. right) almost periodic if we have that $\mathcal{O}_l(f)$ (resp. $\mathcal{O}_r(f))$ is relatively compact in $(C(K), ||\cdot ||_u)$. We showed in a previous work \cite[Corollary 4.4]{CB1} that a function $f$ on $K$ is left almost periodic if and only if it is right almost periodic. Hence we  regard any left or right almost periodic function on $K$ simply as an \textit{almost periodic function}, and denote the space of all almost periodic functions on $K$ as $AP(K)$. We further saw in \cite{CB1} that $AP(K)$ is a norm-closed, conjugate-closed (with respect to complex conjugation), translation-invariant linear subspace of $C(K)$ containing constant functions, such that $AP(K)\subseteq UC(K)$.

Now recall \cite{JE} that for any locally compact Hausdorff space $X$, a map $i : X \ra X$ is called a \textit{topological involution} if $i$ is a homeomorphism and $(i\circ i)(x) = x$ for each $x\in X$. On a semitopological semihypergroup $(K, *)$, a topological involution $i : K \ra K$ given by $i(x):= x^-$ is called a \textit{(semihypergroup) involution} if  $(\mu * \nu)^- = \nu^- * \mu^- $ for any $\mu, \nu \in M(K)$. For any measure $\omega \in M(K)$, we have that $$\omega^-(B) := \omega(B^-)= \omega(i(B)),$$ for any Borel measurable subset $B$ of $K$. As expected, an involution on a semihypergroup is analogous to the inverse function on a semigroup. Hence a semihypergroup with an identity and an involution of the following characteristic is a hypergroup.

\begin{definition}[Hypergroup] A semihypergroup $(H, *)$ is called a hypergroup if it satisfies the following conditions :

\begin{description}
\item[(A5)] There exists an element $e \in H$ such that $p_x * p_e = p_e * p_x = p_x$ for any $x\in H$.
\item[(A6)] There exists an involution $x\mapsto x^-$ on $H$ such that $e \in \mbox{supp} (p_x * p_y)$ if and only if $x=y^-$.
\end{description}
\end{definition}

\noi The element $e$ in the above definition is called the \textit{identity} of $H$. Note that the identity and  involution of a hypergroup are necessarily unique \cite{JE}.


\begin{remark}
Given a Hausdorff topological space $K$, in order to define a continuous bilinear mapping $* : M(K) \times M(K) \rightarrow M(K)$, it suffices to only define the measures $(p_x*p_y)$ for each $x, y \in K$.
 This is true since one can then extend  the convolution `$*$' bilinearly to $M_F^+(K)$. As $M_F^+(K)$ is dense in $M^+(K)$ in the cone topology \cite{JE}, one can further achieve a continuous extension of  `$*$' to $M^+(K)$ and hence to the whole of $M(K)$ using bilinearity.
\end{remark}

Finally, the centre  $Z(K)$ of a (semitopological) semihypergroup $(K, *)$ is defined to be the largest   semigroup included in $(K, *)$. In other words, we have that 
$$Z(K) := \{ x \in K : \mbox{ supp}(p_x*p_y) \mbox{ and  supp}(p_y*p_x) \mbox{ are singleton for any } y \in K\}.$$
Note that it is possible that $Z(K)=\O$, and even if $x\in Z(K)$, then  $  (p_x*p_y)$ and  $(p_y*p_x)$ need not be supported on the same element in $K$, for each $y\in K$. 

But if $K$ is a hypergroup, then we immediately see that $e\in Z(K)$. In fact,   if $K$ is a hypergroup, then the centre $Z(K)$ is indeed the largest group included in $K$, and it can easily be checked \cite{BH} that the following equivalence holds true.
$$ Z(K) := \{ x \in K : p_x * p_{x^-} = p_{x^-} * p_x = p_e\} .$$
Hence the definition for center of a hypergroup\cite{JE} defined as above, coincides with that of the center of a semihypergroup. The following is an example of a hypergroup with a non-trivial centre. 
\vspace{0.03in}

\begin{example}[Zeuner \cite{ZE}]
  Consider the hypergroup $(H, *)$ where $H = [0, 1]$ and the convolution is defined as
$$ p_s*p_t = \frac{p_{|s-t|} + p_{1-|1-s-t|}}{2} .$$
We immediately see that $Z(H) = \{0, 1\}$.

\end{example}

\noi Now we list some well known examples \cite{JE,ZE} of semihypergroups and hypergroups. See \cite[Section 3]{CB1} for details on the constructions as well as the reasons why most of the structures discussed there, although attain a semihypergroup structure, fail to be hypergroups. 


\begin{example} \label{extr}

If $(S, \cdot)$ is a locally compact topological semigroup, then $(S, *)$ is a semihypergroup where $p_x*p_y = p_{_{x.y}}$ for any $x, y \in S$. Similarly, if $(G, \cdot)$ is a locally compact topological group with identity $e_G$, then $(G, *)$ is a hypergroup with the same bilinear operation $*$, identity element $e_G$  and the involution on $G$ defined as $x \mapsto x^{-1}$.

Note that $Z(S) =S$, $Z(G) = G$.
\end{example}

\begin{example} \label{ex2}
Take $T = \{e, a, b\}$ and equip it with the discrete topology. Define
\begin{eqnarray*}
p_e*p_a &=& p_a*p_e \ = \ p_a\\
p_e*p_b &=& p_b*p_e \ = \ p_b\\
p_a*p_b &=& p_b*p_a \ = \ z_1p_a + z_2p_b\\
p_a*p_a &=& x_1p_e + x_2p_a + x_3p_b \\
p_b*p_b &=& y_1p_e + y_2p_a + y_3p_b
\end{eqnarray*}

\noi where $x_i, y_i, z_i \in \mathbb{R}$ such that $x_1+x_2+x_3 = y_1+y_2+y_3 = z_1+z_2 = 1$ and $y_1x_3 = z_1x_1$. Then $(T, *)$ is a commutative hypergroup with identity $e$ and the identity function on $T$ taken as involution. In fact, any finite set can be given several (not necessarily equivalent)  semihypergroup and hypergroup structures.
\end{example}

\begin{example} \label{ex3}
Let $G$ be a locally compact group and $H$ be a compact subgroup of $G$ with normalized Haar measure $\mu$. Consider the left coset space $S := G/H = \{xH : x \in G\}$ and the double coset space $K := G/ /H = \{HxH : x \in G\}$ and equip them with the respective quotient topologies. Then $(S, *)$ is a semihypergroup and $(K, *)$ is a hypergroup where the convolutions are given as following for any $x, y \in G$ :
$$p_{_{xH}} * p_{_{yH}} = \int_H p_{_{(xty)H}} \ d\mu(t), \ \ \ \ p_{_{HxH}} * p_{_{HyH}} = \int_H p_{_{H(xty)H}} \ d\mu(t)  .$$
It can be checked \cite{CB1} that the coset spaces $(S, *)$ fail to have a hypergroup structure. 
\end{example}

\begin{example} \label{ex4}
Let $G$ be a locally compact topological group and $H$ be any compact group with normalized Haar measure $\sigma$. 
For any continuous affine action\cite{JE} $\pi$ of $H$ on $G$, consider the orbit space $\mathcal{O} := \{x^H : x \in G\}$, where for each $x\in G$,  $x^H =  \{\pi(h, x): h\in H\}$ is the orbit of $x$ under the action $\pi$. 


\noi Consider $\mathcal{O}$ with the quotient topology and the following convolution:
$$ p_{_{x^H}} * p_{_{y^H}} := \int_H \int_H p_{_{(\pi(s, x)\pi(t, y))^H}} \ d\sigma(s) d\sigma(t) .$$
\noi Then $(\mathcal{O}, *)$ becomes a semihypergroup. It can be shown \cite{BH, JE} that $(\mathcal{O}, *)$ becomes a hypergroup only if for each $h\in H$, the map $x\mapsto \pi(h, x) : G\ra G$ is an  automorphism.
\end{example}


\section{General Semihypergroup Action and Continuity}
\label{Actions}

We first provide a general definition of action of a semihypergroup on a Hausdorff topological space, analogous to its definition for the classical case of topological semigroups. Hence in particular, whenever we take the semihypergroup to be a locally compact semigroup  as outlined in Example \ref{extr}, the definition coincides with that of topological semigroups.


\begin{definition}\label{action}
Let $(K, *)$ be a semihypergroup and $X$ be a Hausdorff topological space. A map $\sigma: M^+(K)\times X \rightarrow X$ is called a \textbf{general action} of $K$ on $X$ if the following two conditions hold true:


\begin{enumerate}
\item For each $\omega \in M^+(K)$ the map $\sigma_\omega:X \rightarrow X$ given by $\sigma_\omega(x) := \sigma(\omega, x)$ is continuous.
\item For any $\mu, \nu \in M^+(K)$, $x \in X$ we have that $\sigma(\mu*\nu,x) = \sigma(\mu, \sigma(\nu, x))$
\end{enumerate}
\end{definition}

We can define general actions of a left/ right/ semitopological semihypergroup $K$ on a Hausdorff space $X$ in the same manner as above. 

Let $\sigma: M^+(K)\times X \rightarrow X$ be a general action of $(K, *)$ on $X$.  Then $\sigma$ is called a seperately continuous general action, if for each $x\in X$ the map $\mu  \mapsto  \sigma(\mu, x): M^+(K)  \rightarrow  X$ is also continuous on $M^+(K)$ in the cone topology. Similarly, we say that $\sigma$ is a continuous general action, if $\sigma$ is jointly continuous on $M^+(K) \times X$. Finally, for any two subsets $N\subseteq M^+(K)$ and $V\subseteq X$, we define
$$ N.V := \{\sigma(\mu, x) : \mu \in N, x \in V\}.$$



We see that if a a left (right) topological semihypergroup $(K, *)$ has an identity $e$, then given any two distinct points in a Hausdorff space $X$, a certain general action $\sigma$ of $K$ on $X$ will always map each point away from the other. In particular, we have the following assertion.

\begin{proposition} \label{actlem2}
Let $(K, *)$ be a compact left topological semihypergroup, and $\sigma$ be a separately continuous general action of $K$ on a Hausdorff space $X$. Furthermore, let $K$ has an identity $e$ and $\sigma_{p_{_e}}$ is the identity map on $X$. Then for any two distinct points $x, y$ in $X$, there exist neighborhoods $N$ of $p_{_e}$, $U$ of $x$ and $V$ of $y$ such that $N.U \cap V = \O$.
\end{proposition}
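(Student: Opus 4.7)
The plan is to combine Hausdorff separation in $X$, the identity property $\sigma_{p_e}=\mathrm{id}_X$, and separate continuity of $\sigma$, with compactness of $K$ entering at the uniformization step. First, by Hausdorffness of $X$ I would choose disjoint open sets $U_0\ni x$ and $V\ni y$; these serve as candidates for $U$ (possibly to be shrunk) and $V$. Because $\sigma_{p_e}=\mathrm{id}_X$, we immediately have $\sigma(p_e,u)=u\in U_0\subseteq X\setminus V$ for every $u\in U_0$, so the desired disjointness $\sigma(\{p_e\}\times U_0)\cap V=\emptyset$ already holds on the slice $\mu=p_e$. The task is to replace $\{p_e\}$ by an actual open neighborhood $N\subseteq M^+(K)$ of $p_e$, possibly at the cost of shrinking $U_0$ to some smaller $U\ni x$.

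For each individual $u\in U_0$, the separate continuity of $\sigma(\cdot,u):M^+(K)\to X$ at $p_e$, together with $\sigma(p_e,u)=u\notin V$ and the openness of $X\setminus V$, produces an open $N_u\ni p_e$ with $\sigma(N_u,u)\cap V=\emptyset$. The main task is to find a \emph{single} open $N\ni p_e$ that works simultaneously for all $u$ in some smaller open $U\ni x$. I plan to argue by contradiction: if no such $(N,U)$ exists, directing pairs of neighborhoods of $(p_e,x)$ by reverse inclusion produces a net $(\mu_\alpha,u_\alpha)\to(p_e,x)$ in $M^+(K)\times X$ with $\sigma(\mu_\alpha,u_\alpha)\in V$ for every $\alpha$. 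Since $K$ is compact, the space $P(K)$ of probability measures is compact in the cone topology, so one may pass to a subnet whose measure components lie in a controlled compact piece of $P(K)$ near $p_e$. Using the action identity $\sigma(\mu*\nu,\cdot)=\sigma(\mu,\sigma(\nu,\cdot))$ together with the two-sided identity role of $p_e$ (so that $p_e*\mu=\mu*p_e=\mu$) and the separate continuity of $\sigma(\cdot,x)$ and $\sigma_\mu$, I would force $\sigma(\mu_\alpha,u_\alpha)\to x$ along the subnet, contradicting $\sigma(\mu_\alpha,u_\alpha)\in V$ provided $V$ (and hence the neighborhood $U_0$) is first chosen small enough that $x\notin\overline V$; such a choice can be arranged by exploiting $\sigma_{p_e}=\mathrm{id}$, which permits $U_0$ to be taken arbitrarily small around $x$.

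The hard part will be the final contradiction step: extracting enough joint-continuity information at $(p_e,x)$ from the merely separately continuous action $\sigma$. This is the familiar Ellis/Namioka-type obstacle of passing from separate to joint continuity at a specified point, and its resolution should rely crucially on both the compactness of $K$ (which makes the cone topology on $P(K)$ sufficiently rigid near $p_e$) and the identity role of $p_e$ in $(M(K),*)$ (so that $\sigma_\mu$ is forced to behave ``nearly like the identity'' on a neighborhood of $x$ when $\mu$ is near $p_e$). Once this uniformization is carried out, taking $N$, $U$, and $V$ from the construction above directly yields the desired triple with $N.U\cap V=\emptyset$.
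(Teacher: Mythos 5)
Your proposal sets up the problem correctly and correctly diagnoses that the whole difficulty is an Ellis/Namioka-type passage from separate to joint continuity at the specific point $(p_e,x)$ — but it then defers exactly that step (``the hard part will be the final contradiction step \dots its resolution should rely crucially on \dots''). That step \emph{is} the proposition. Separate continuity alone cannot ``force $\sigma(\mu_\alpha,u_\alpha)\to x$'' along a net $(\mu_\alpha,u_\alpha)\to(p_e,x)$: that assertion is precisely joint continuity of $\sigma$ at $(p_e,x)$, which is what is to be proved (and is essentially the first half of Theorem~3.4). Likewise, the claim that $\sigma_\mu$ ``behaves nearly like the identity near $x$ when $\mu$ is near $p_e$'' has no justification from the hypotheses as stated. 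So as written the argument is circular at its core, and the proposal never uses the left-topological hypothesis (continuity of $L_\omega$ on $M^+(K)$), which turns out to be indispensable.

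For comparison, the paper's proof supplies the missing mechanism as follows. Compose $\sigma$ with a Urysohn function $f$ separating $x$ from $y$ to get a scalar separately continuous $g=f\circ\sigma$ on $M^+(K)\times X$; use compactness of $K$ to make $M^+(K)$ locally compact in the cone topology, so a Namioka-type theorem yields a dense $G_\delta$ set $M\subseteq M^+(K)$ of measures at which $g$ is \emph{jointly} continuous against all of $X$. The set $S=\{\mu: g(\mu,x)\neq g(\mu,y)\}$ is open and contains $p_e$ (here is where $\sigma_{p_e}=\mathrm{id}$ is used), so one can pick $\mu_0\in S\cap M$ — a ``generic'' point of joint continuity that still separates $x$ from $y$. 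Joint continuity at $(\mu_0,x)$ gives neighborhoods $N_0\ni\mu_0$ and $U\ni x$; then the left-topological hypothesis gives a neighborhood $N\ni p_e$ with $\mu_0*N\subseteq N_0$, and the action identity $\sigma(\mu_0*\nu,u)=\sigma(\mu_0,\sigma(\nu,u))$ transports the estimate at $\mu_0$ back to a separation statement at $p_e$, yielding the contradiction. Your outline contains none of these ingredients (the $G_\delta$ theorem, the generic point $\mu_0$, the translation trick), so it does not constitute a proof; to repair it you would need to import essentially this entire machinery.
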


\begin{proof}
Using Urysohn's Lemma, we first choose a continuous function $f: X \ra [-1, 1]$ such that $f(x) \neq f(y)=0$. Set $g:= f \circ \sigma$. 


Since $K$ is compact, the cone toplogy coincides with the weak$^*$-topology, and hence $M^+(K)$ is a  locally compact Hausdorff space \cite{DU} here. Also since $\sigma$ is separately continuous, the function $g: M^+(K) \times X \ra [-1, 1]$ is separately continuous. Hence we can find \cite{RU} a dense $G_\delta$ subset $M$ of $M^+(K)$ such that $g$ is continuous at each point $(\mu, x)$ in $M \times X$. Now set 
$$S:=\{\mu \in M^+(K): g(\mu, x) \neq g(\mu, y)\}.$$
Note that $S$ is non-empty since $\sigma_{p_e}$ is identity, and hence we have
\begin{eqnarray*}
g(p_e, x) &=& f(\sigma(p_e, x))\\
&=& f(x) \ \neq \ f(y) \ = \ f(\sigma(p_e, y)) \ = \ g(p_e, y).
\end{eqnarray*}

Also, $S$ is open in $M^+(K)$ since the map $\mu\mapsto g(\mu, x)-g(\mu, y): M^+(K)\ra \mathbb{R}$ is continuous. Hence $S \cap M \neq \O$. Pick any $\mu_0 \in S \cap M$, and in particular, set $\varepsilon_0:= |g(\mu_0, x) - g(\mu_0, y)|>0 $.

Since $g$ is jointly continuous at $(\mu_0, x)$, there exist open neighborhoods $N_0$ of $\mu_0$ and $U$ of $x$ such that
\begin{equation*} 
 |g(\nu, z) - g(\mu_0, x)|< \varepsilon_0/4 \mbox{ \ \ \ for any \ \ } \nu\in N_0, \ z\in U.
\end{equation*}
We further set $V:= \{z\in X : |g(\mu_0, z) - g(\mu_0, y)| < \varepsilon_0/4 \} $. Hence $x\notin V$, and since $g$ is separately continuous, $V$ is an open neighborhood of $y$.

But $K$ is left topological, and hence the map $L_{\mu_0} : M^+(K) \ra  M^+(K)$ is continuous. In particular, since $L_{\mu_0}(p_{_e}) = \mu_0$ and $N_0$ is an open neighborhood of $\mu_0$, we can find an open neighborhood $N$ of $p_{_e}$ in $M^+(K)$ such that $\{\mu_0\}*N = L_{\mu_0}(N) \subseteq N_0$. 

Now if possible, assume that $N.U \cap V \neq \O$. Pick some  $\nu \in N$, $u\in U$, $v\in V$ such that $\sigma(\nu, u)= v \in V$. Then we have
\begin{eqnarray*}
\varepsilon_0 = |g(\mu_0, x) - g(\mu_0, y)| &\leq& |g(\mu_0, x) - g(\mu_0*\nu, u)| + |g(\mu_0*\nu, u) - g(\mu_0, y)|\\
&<& \varepsilon_0/4 + |g(\mu_0, \sigma(\nu, u)) - g(\mu_0, y)|\\
&<& \varepsilon_0/4 + \varepsilon_0/4 < \varepsilon_0,
\end{eqnarray*}
where the third inequality follows from the choice of $N$ and $U$, since $\sigma(\nu, u)\in V$. Thus we arrive at a contradiction, and must have that $N.U \cap V = \O$.
\end{proof}

Next we proceed towards the main result of this section, that investigates the instances when separate continuity forces joint continuity of the general action of a left topological semihypergroup on a compact Hausdorff space. The proof of the result follows similar ideas as outlined in \cite{MI}, for the category of compact topological groups and semigroups. We show, with the help of \Cref{actlem2} and some important additional details, that for the general category of compact left topological semihypergroups, separate continuity of a general action forces joint continuity on a certain set of points. In addition, if the general action is linear in $M^+(K)$, then the result can be extended further to the whole measure space of the said set. 

We first prove the result for the case of all general actions $\sigma$ of $K$ on $X$, for which $\sigma_{p_{_e}}$ is the identity map on $X$, where $e$ is the identity of $K$.






\begin{theorem} \label{actmain1}
Let $(K, *)$ be a compact left topological semihypergroup  with identity $e$ and $\sigma$ be a separately continuous general action of $K$ on a compact Hausdorff space $X$ such that $\sigma(p_e, x)=x$ for each $x \in X$. Let $H \subseteq K$ be a hypergroup. Then $\sigma$ is continuous at each point $(p_{_g},x)$, for any $g \in Z(H)$, $x \in X$.
\end{theorem}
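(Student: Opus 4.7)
The strategy follows Mitchell's two-step scheme \cite{MI} adapted to the semihypergroup setting: first establish joint continuity of $\sigma$ at each point of the form $(p_e, x)$ using the separation result Proposition \ref{actlem2}, then bootstrap to points $(p_g, x)$ with $g \in Z(H)$ by exploiting the fact that $Z(H)$ is the largest group inside $H$, so that each $g \in Z(H)$ admits a two-sided inverse $g^{-}$ with $p_g * p_{g^{-}} = p_{g^{-}} * p_g = p_{e_H} = p_e$ (the last equality using that a sub-hypergroup of $K$ containing $e$ must share its identity, by uniqueness).

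For \textbf{Step 1}, I would argue by contradiction. Given a net $(\mu_\alpha, x_\alpha) \to (p_e, x)$ in $M^+(K)\times X$, assume that $\sigma(\mu_\alpha, x_\alpha)$ does not converge to $\sigma(p_e, x) = x$. Then along a subnet, $\sigma(\mu_\alpha, x_\alpha)$ stays outside some open neighborhood of $x$, and by compactness of $X$ one may pass to a further subnet converging to a point $z \neq x$. Applying Proposition \ref{actlem2} to $x$ and $z$ produces open neighborhoods $N$ of $p_e$, $U$ of $x$, and $V$ of $z$ with $N.U \cap V = \emptyset$. Since $\mu_\alpha \to p_e$ and $x_\alpha \to x$, eventually $\mu_\alpha \in N$ and $x_\alpha \in U$, which forces $\sigma(\mu_\alpha, x_\alpha) \in N.U \subseteq X \setminus V$, contradicting convergence to $z \in V$.

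For \textbf{Step 2}, fix $g \in Z(H)$ and $x \in X$. Let $(\mu_\alpha, x_\alpha) \to (p_g, x)$; by compactness of $X$ it suffices to show that every cluster point $y$ of $\sigma(\mu_\alpha, x_\alpha)$ equals $\sigma(p_g, x)$. Along a subnet with $\sigma(\mu_\beta, x_\beta) \to y$, the continuity of $\sigma_{p_{g^{-}}}$ (from the definition of a general action) together with associativity gives
$$\sigma(p_{g^{-}}, y) \;=\; \lim_\beta \sigma(p_{g^{-}}, \sigma(\mu_\beta, x_\beta)) \;=\; \lim_\beta \sigma(p_{g^{-}} * \mu_\beta, x_\beta).$$
Since $K$ is left topological, $p_{g^{-}} * \mu_\beta \to p_{g^{-}} * p_g = p_e$, so the net $(p_{g^{-}} * \mu_\beta, x_\beta)$ converges to $(p_e, x)$. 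Step 1 then yields $\sigma(p_{g^{-}} * \mu_\beta, x_\beta) \to \sigma(p_e, x) = x$, so $\sigma(p_{g^{-}}, y) = x$. Applying $\sigma_{p_g}$ and associativity once more gives $\sigma(p_g, x) = \sigma(p_g * p_{g^{-}}, y) = \sigma(p_e, y) = y$, as required.

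The chief obstacle is Step 1: turning separate continuity into joint continuity at the identity. This is precisely the content of Proposition \ref{actlem2}, whose proof relies on the Urysohn--$G_\delta$ trick tailored to the cone topology on $M^{+}(K)$; once that separation is in hand, the rest of the argument is a clean transfer through the group $Z(H)$. A smaller technical point worth flagging is the identification of the identity $e_H$ of $H$ with the identity $e$ of $K$, which underpins the final identity $\sigma(p_e, y) = y$ in Step 2.
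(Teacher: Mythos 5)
Your proposal is correct and follows essentially the same two-step strategy as the paper's proof: first joint continuity at each $(p_e,x)$ via \Cref{actlem2} together with compactness of $X$, then transfer to $(p_g,x)$ using the inverse $g^-$ in $Z(H)$, left-translation continuity of $L_{p_{g^-}}$, and associativity. The only differences are cosmetic — you phrase Step 1 as a net/contradiction argument and Step 2 via cluster points, where the paper uses a finite subcover and a direct limit computation — and you rightly flag the implicit identification $e_H=e$, which the paper also assumes without comment.
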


\begin{proof}

Pick and fix some $x \in X$ and let $W_x$ be an open neighborhood of $x=\sigma(p_e, x)$ in $X$. 


Pick any $y \in X\setminus W_x$. Using  \Cref{actlem2} we can find open neighborhoods $N_y$ of $p_e$, $U_x^{^y}$ of $x$ and $V_y$ of $y$ such that $N_y.U_x^{^y} \cap V_y = \O$. We can find such a set of open neighborhoods for each $y \in X\setminus W_x$. Since $X\setminus W_x$ is compact, there exists $ y_1, y_2, \ldots, y_n \in X\setminus W_x$ such that 
$$X\setminus W_x \subseteq \Big{(} \cup_{i=1}^n V_{y_i} \Big{)} =: V.$$


Set $N:=\cap_{i=1}^n N_{y_i} $ { and } $U:=\cap_{i=1}^n U_x^{^{y_i}}.$
Then clearly $N.U \cap V = \O$. Thus we get open neighborhoods $N$ of $p_e$, $U$ of $x$ such that $N.U \subseteq W_x$. Since this is true for any $x\in X$, we get that $\sigma$ is continuous at $(p_e, x)$ for each $x \in X$.

Next pick any $g\in Z(H)$, $x \in X$. Let $\{(\mu_\alpha, x_\alpha)\}$ be a net in $M^+(K)\times X$ that converges to $(p_g, x)$. Since $K$ is left topological, $L_{p_{g^-}}$ is continuous and hence $p_{g^-}*\mu_\alpha \ra  p_{g^-}*p_g =p_e$ in $M^+(K)$, as $g\in Z(H)$. But $\sigma$ is continuous at $(p_e, x)$, and hence 
$ \sigma(p_{g^-}*\mu_\alpha, x_\alpha) \ra \sigma(p_e, x)= x$ in $X$. Thus finally we have that
\begin{eqnarray*}
\sigma(\mu_\alpha, x_\alpha)&=&\sigma(p_e*\mu_\alpha, x_\alpha)\\
&=& \sigma\big{(}(p_g*p_{g^-})*\mu_\alpha, x_\alpha\big{)}\\
&=& \sigma(p_g, \sigma(p_{g^-}*\mu_\alpha,x_\alpha)) \longrightarrow \sigma(p_g, x),
\end{eqnarray*}
where the third equality follows since $(M(K), *)$ is associative, and the convergence follows from the continuity of the map $\sigma_{p_{_g}}:X \ra X$.
\end{proof}

Now we use \Cref{actmain1} to show that a similar conclusion holds true for any separately continuous general action $\sigma$ of $K$ on $X$.

\begin{theorem} \label{actmain2}
Let $(K, *)$ be a compact left topological semihypergroup  with identity $e$ and $\sigma$ be a separately continuous general action of $K$ on a compact Hausdorff space $X$. Let $H \subseteq K$ be a hypergroup. Then $\sigma$ is continuous at each point $(p_{_g},x)$, for any $g \in Z(H)$, $x \in X$.
\end{theorem}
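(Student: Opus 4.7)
The plan is to reduce to Theorem \ref{actmain1} by restricting $\sigma$ to an appropriate invariant compact subspace of $X$ on which $\sigma_{p_e}$ acts as the identity. Set $\phi := \sigma_{p_{_e}} : X \to X$, which is continuous by hypothesis. Since $e$ is an identity of $K$, the measure $p_e$ is a two-sided identity for $(M(K), *)$ (this follows from $p_e * p_x = p_x * p_e = p_x$ by bilinearity on $M_F^+(K)$ and density of $M_F^+(K)$ in $M^+(K)$ together with separate continuity of $*$). In particular $p_e * p_e = p_e$, so the action axiom gives $\phi \circ \phi = \phi$: the map $\phi$ is a continuous idempotent self-map of $X$. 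Let $Y := \phi(X)$; as a continuous image of the compact Hausdorff space $X$, the set $Y$ is a compact subspace of $X$, and since $\phi$ is idempotent, $Y$ coincides with the set of fixed points of $\phi$, so $\phi|_Y = \mathrm{id}_Y$.

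Next I verify that $Y$ is invariant under $\sigma$ and that the restriction $\tilde\sigma := \sigma|_{M^+(K) \times Y}$ is a separately continuous general action of $K$ on $Y$ satisfying the hypotheses of Theorem \ref{actmain1}. For any $\omega \in M^+(K)$ and $y \in Y$, using $p_e * \omega = \omega$ and the action axiom,
\[
\sigma(\omega, y) \;=\; \sigma(p_e * \omega, y) \;=\; \sigma(p_e, \sigma(\omega, y)) \;=\; \phi(\sigma(\omega, y)),
\]
so $\sigma(\omega, y)$ is a fixed point of $\phi$ and hence lies in $Y$. Thus $\tilde\sigma$ takes values in $Y$; separate continuity and the action identity pass to the restriction automatically, and $\tilde\sigma(p_e, y) = \phi(y) = y$ for every $y \in Y$. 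Applying Theorem \ref{actmain1} to $\tilde\sigma$, it is continuous at every point $(p_{_g}, y)$ with $g \in Z(H)$, $y \in Y$.

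To finish, I transfer this continuity back to $X$. Fix $g \in Z(H)$ and $x \in X$, and set $y := \phi(x) \in Y$. Using $\omega * p_e = \omega$ and the action axiom, for every $\omega \in M^+(K)$ we have
\[
\sigma(\omega, x) \;=\; \sigma(\omega * p_e, x) \;=\; \sigma(\omega, \sigma(p_e, x)) \;=\; \sigma(\omega, \phi(x)) \;=\; \tilde\sigma(\omega, \phi(x)).
\]
Now let $(\mu_\alpha, x_\alpha)$ be any net in $M^+(K) \times X$ converging to $(p_{_g}, x)$. Continuity of $\phi$ gives $\phi(x_\alpha) \to \phi(x) = y$, so $(\mu_\alpha, \phi(x_\alpha)) \to (p_{_g}, y)$ in $M^+(K) \times Y$, whence by continuity of $\tilde\sigma$ at $(p_{_g}, y)$,
\[
\sigma(\mu_\alpha, x_\alpha) \;=\; \tilde\sigma(\mu_\alpha, \phi(x_\alpha)) \;\longrightarrow\; \tilde\sigma(p_{_g}, y) \;=\; \sigma(p_{_g}, x),
\]
as required.

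The main obstacle here is conceptual rather than technical: noticing that $\sigma_{p_e}$ automatically serves as a continuous retraction onto an invariant compact subspace satisfying the stronger hypothesis of Theorem \ref{actmain1}, so that associativity and the two-sided identity property of $p_e$ let one reroute every point of $X$ through $Y$ without altering its $\sigma$-image. Once this reduction is in place, the theorem follows directly from Theorem \ref{actmain1}.
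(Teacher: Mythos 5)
Your proof is correct and follows essentially the same route as the paper: both restrict $\sigma$ to the compact invariant set $\sigma_{p_e}(X)$ (your $Y$, the paper's $L$), on which $\sigma_{p_e}$ is the identity, apply Theorem \ref{actmain1} there, and then transfer continuity back to all of $X$ via $\omega * p_e = \omega$ and the continuity of $\sigma_{p_e}$. The only cosmetic difference is that you phrase the reduction in terms of the idempotent retraction $\phi$ and its fixed-point set, which is a slightly cleaner packaging of the same computation.
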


\begin{proof}
For each $x\in X$, we define $\tilde{x} = \sigma(p_e, x)$, and set $L:= \{\tilde{x}: x\in X\} = \{p_e\}.X$. 

For any $\mu \in M^+(K)$, $x \in X$ we have that $M^+(K).L\subseteq L$, since
\begin{eqnarray*}
\sigma(\mu, \tilde{x}) \ = \ \sigma(\mu, \sigma(p_e, x)) &=& \sigma(\mu*p_e, x)\\
&=& \sigma(p_e*\mu, x) \ = \ \sigma(p_e, \sigma(\mu, x)) \in L.
\end{eqnarray*}
In particular, for each $x\in X$ we have that 
$$\sigma(p_e, \tilde{x}) = \sigma(p_e, \sigma(p_e, x))=\sigma(p_e*p_e, x)=\tilde{x}.$$
Hence the restricted general action $\tilde{\sigma} :=\sigma|_{M^+(K)\times L}: M^+(K)\times L \ra L$ is well defined and $\tilde{\sigma}_{p_{_e}}$ is the identity map on $L$. Note that $L$ is compact in $X$ since the map $\sigma_{p_{_e}}: X \ra X$ is continuous and $\sigma_{p_{_e}}(X)=L$. Moreover, since $\tilde{\sigma}$ is separately continuous when $L$ is equipped with the subspace topology induced from $X$, using \Cref{actmain1} we have that $\tilde{\sigma}$ is continuous at each point $(p_g,\tilde{x})$,  for any $g\in Z(H)$, $x\in X$.


Now pick any $g\in Z(H)$, $x\in X$ and let $\{(\mu_\alpha, x_\alpha)\}$ be any net in $M^+(K)\times X$ that converges to $(p_g, x)$. Since $\sigma_{p_{_e}}$ is continuous, the net $\{\tilde{x}_\alpha\}:=\{\sigma(p_e, x_\alpha)\}$ converges to $\tilde{x}$ in $X$, and hence in $L$. Hence $\{(\mu_\alpha, \tilde{x}_\alpha)\}$ converges to $(p_g, \tilde{x})$ in $M^+(K)\times L$. Thus finally, we have that
\begin{eqnarray*}
\sigma(\mu_\alpha, x_\alpha) &=& \sigma(\mu_\alpha * p_e, x_\alpha)\\
&=& \tilde{\sigma}(\mu_\alpha, \tilde{x}_\alpha) \longrightarrow \tilde{\sigma}(p_g, \tilde{x})= \sigma(p_g*p_e, x) = \sigma(p_g, x).
\end{eqnarray*}
\end{proof}

\begin{remark}
Observe that if the general action $\sigma$ in \Cref{actmain2} is linear in $M^+(K)$ as well, then using the density of $M_F^+(K)$ in $M^+(K)$ we can conclude that separate continuity of $\sigma$ forces joint continuity of $\sigma$ on $M^+(Z(H))\times X$.
\end{remark}

Keeping this in mind, note that the action of a (semitopological) semihypergroup $K$ can also be defined in the following manner if the space $X$ on which $K$ acts, is a locally convex space. A locally convex Hausdorff topological vector space $E$ with  a family $Q$ of seminorms is simply denoted by $(E, Q)$ or $(E,\tau_Q)$. In rest of the article,  the topology assumed on $E$ (or on any Borel subset $X$ of $E$), is the (induced) topology $\tau_Q$ generated by $Q$.

\begin{definition}\label{action0}
Let $(K,*)$ be a (semitopological) semihypergroup and $X$ be a compact convex subset of $(E, Q)$. A map $\pi: K\times X \ra X$ is called a (semihypergroup) action if the following conditions are satisfied:
\begin{enumerate}
\item For any $s, t \in K$, $x\in X$ we have
$$\pi(s, \pi(t, x)) = \int_K \pi(\zeta,x) \ d(p_s*p_t)(\zeta).$$
\item Whenever $K$ has a two-sided identity $e$,  we have $\pi(e, x) =x$ for each $x\in X$.
\end{enumerate}
\end{definition}

Such an action $\pi$ of $K$ on $X$ is called (separately) continuous, if the map $\pi$ is (separately) continuous on $K\times X$. For each $s\in K$, we denote the map $x\mapsto \pi(s, x):X\ra X$ by $\pi_s$.

\begin{remark}
The above definition of a semihypergroup action is a particular case of  \Cref{action}. This is true since given a (separately) continuous action $\pi$ of $K$ on a compact convex subset $X$ of a locally convex Hausdorff vector space $(E, Q)$ in the sense of \Cref{action0}, one can simply define $$\sigma_\pi(\mu, x) := \int_K \pi(\zeta, x) \ d\mu(\zeta).$$
Since $\sigma_\pi(p_s, x):= \pi(s, x)$ for any $s\in K, x\in X$, and $M_F^+(K)$ is dense in $M^+(K)$ in the cone topology, we see that $\sigma_\pi$ is a (separately) continuous general action of $K$ on $X$  (\Cref{action}), since $\mu*\nu = \int_K\int_K \ (p_s*p_t) \ d\mu(s)\ d\nu(t)$ for each $\mu, \nu \in M^+(K)$ \cite{JE}. 

Note that the action $\sigma_\pi$ induced by $\pi$ is linear by construction. But a general action of a semihypergroup $K$ as defined in \Cref{action} need not be linear.
\end{remark}

In fact, it follows immediately that \Cref{action0} is structurally equivalent to the following definition of a semihypergroup action, induced by Jewett's definition \cite{JE} of hypergroup actions.

\begin{definition}\label{action1}
Let $K$ be a semihypergroup, and $(E, Q)$ be a separated locally convex space. Then a \textit{(semihypergroup) action} $\pi$ of $K$ on $E$ is a homomorphism from the associative algebra $M(K)$ to the algebra $L(E)$ of linear operators on $E$.

\smallskip

 In other words, a semihypergroup action $\pi$ of $K$ on $E$ is a bilinear map $(\mu, s) \mapsto \pi_\mu(s): M(K)\times E\ra E$ such that $\pi_{_{(\mu*\nu)}} = \pi_\mu \circ \pi_\nu$ on $E$ for each $\mu, \nu \in M(K)$.
\end{definition}

We say that a semihypergroup action $\pi: M(K)\times E\ra E$ is  \textit{(separately) $\tau$-continuous} if the map $\pi$ is (separately) continuous when $M(K)$ is equipped with a certain topology $\tau$ and $E$ is given the usual topology $\tau_Q$ induced by the associated family of  seminorms $Q$.


\section{Fixed-Point Properties and Amenability}
For the first part of this section, we use \Cref{action0} for semihypergroup actions, unless otherwise specified. Before proceeding to the main results of this section, we first briefly recall some definitions and results regarding amenability on function spaces of a semitopological semihypergroup. 

Let $K$ be any (semitopological) semihypergroup  and $\mathcal{F}$ a linear subspace of $C(K)$ containing constant functions. A function $m\in \mathcal{F}^*$ is called a \textit{mean} of $\mathcal{F}$ if we have that $||m|| = 1 = m(1)$, where $1$ denotes the constant function $\equiv 1$ on $K$. We denote the set of all means on $\mathcal{F}$ as $\mathcal{M}(\mathcal{F})$. If $\mathcal{F}$ is a left translation-invariant linear subspace of $C(K)$ containing constant functions, a mean $m$ of $\mathcal{F}$ is called a \textit{left invariant mean} (LIM) if $m(L_xf) = m(f)$ for any $x\in K$, $f\in \mathcal{F}$.

\begin{remark}\label{remaff}

Let $X$ be a convex subset of $(E, Q)$ and $Y$ be any locally convex space. Then a continuous map $T:X\ra Y$ is called \textit{affine} if for any $\alpha\in [0, 1]$, $x_1, x_2\in X$ we have that $$T(\alpha x_1 + (1-\alpha)x_2) = \alpha T(x_1) + (1-\alpha) T(x_2).$$
We say that a (separately) continuous action $\pi$ of $K$ on  $X$ is {affine} if for each $s\in K$, the map $\pi_s:X\ra X$ is affine. We denote by $A_f(X)$ the set of all affine maps $T: X\ra \mathbb{C}$. We know that $A_f(X)$ is a closed linear subspace of $C(X)$, \textit{i.e,} is itself a convex space in $C(K)$ \cite{PA}. 

For each $x\in X$, we define the evaluation map $\varepsilon_x:C(X) \ra \mathbb{C}$ as $\varepsilon_x(f) :=f(x)$ for each $f\in C(X)$. Since the constant function $1$ is in $A_f(X)$, we have that each $\varepsilon_x$ is a mean on $A_f(X)$. In fact, it can be shown \cite{PA} that the map $x\mapsto \varepsilon_x|_{A_f(X)} : X \ra \mathcal{M}(A_f(X))$ is an affine homeomorphism. The set of all evaluation maps $\{\varepsilon_x:x\in K\}$ is denoted by $\varepsilon(K)$. 

Similarly, for each $\mu\in M(K)$, we define the evaluation map $\varepsilon_\mu$ at the measure $\mu$ as the functional $f\mapsto \int_K f \ d\mu:C(K) \ra \mathbb{C}$. Thus for each $x\in K$, we simply have that $\varepsilon_{p_{_x}} = \varepsilon_x$ on $C(K)$.
\end{remark}

In the above setting, let $\pi$ be an affine action of $K$ on a compact convex subset $X\subseteq E$. Then $\pi$ naturally induces an affine action of $K$ on $A_f(X)$ when $K$ is commutative. We provide a brief account of this fact below.

\begin{proposition}\label{inact}
Let $K$ be a commutative semitopological semihypergroup, $X$ be a compact convex subset of a locally convex Hausdorff space $(E, Q)$ and $\pi$ be an affine action of $K$ on $X$. Then $\pi$ naturally induces an affine action $\tilde{\pi}$ of $K$ on $A_f(X)$.
\end{proposition}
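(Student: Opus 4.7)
The plan is to define the induced map pointwise by
\[
\tilde{\pi}(s, f)(x) := f(\pi(s, x)) = (f \circ \pi_s)(x)
\]
for $s \in K$, $f \in A_f(X)$, $x \in X$, and then check that this formula satisfies the two conditions of \Cref{action0} together with affinity in the function variable.

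First I would verify that $\tilde{\pi}(s, f)$ really lies in $A_f(X)$. Since $\pi$ is an affine action, each $\pi_s : X \to X$ is continuous and affine; since $f \in A_f(X) \subseteq C(X)$ is continuous and affine (as noted in \Cref{remaff}), the composition $f \circ \pi_s$ is again continuous and affine, so $\tilde{\pi}(s,f) \in A_f(X)$. Affinity of each $\tilde{\pi}_s$ on $A_f(X)$ is immediate from the pointwise definition, since finite convex combinations pass through $\pi_s$ without interacting with it.

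The core step is the action axiom
\[
\tilde{\pi}(s, \tilde{\pi}(t, f)) \;=\; \int_K \tilde{\pi}(\zeta, f) \, d(p_s * p_t)(\zeta).
\]
Unfolding pointwise gives $\tilde{\pi}(s, \tilde{\pi}(t, f))(x) = f\bigl(\pi(t, \pi(s, x))\bigr)$, and the action axiom for $\pi$ in \Cref{action0} rewrites this as $f\!\left(\int_K \pi(\zeta, x)\, d(p_t * p_s)(\zeta)\right)$. This is precisely where commutativity of $K$ enters: I use $p_t * p_s = p_s * p_t$ to replace the convolution, and then I push $f$ inside the integral using the fact that a continuous affine scalar map on the compact convex set $X$ commutes with barycenters of probability measures. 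The right-hand side thus becomes $\int_K f(\pi(\zeta, x))\, d(p_s * p_t)(\zeta) = \int_K \tilde{\pi}(\zeta, f)(x)\, d(p_s * p_t)(\zeta)$, as required. If $K$ has a two-sided identity $e$, condition (2) is trivially satisfied, since $\tilde{\pi}(e, f)(x) = f(\pi(e, x)) = f(x)$.

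The step I expect to be the main obstacle is the commutation of $f$ with the $X$-valued integral $\int_K \pi(\zeta, x)\, d(p_s * p_t)(\zeta)$. This is the standard barycenter property for continuous affine functionals on compact convex subsets of a locally convex Hausdorff space, applied to the probability measure obtained by pushing $p_s * p_t$ forward under the continuous map $\zeta \mapsto \pi(\zeta, x)$; but care is needed to confirm that the vector-valued integral appearing in \Cref{action0} is interpreted in exactly the (Pettis/weak) sense that makes this identity legitimate. Everything else reduces to bookkeeping using the bilinearity of convolution and the definition of $A_f(X)$.
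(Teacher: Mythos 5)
Your proposal is correct and follows essentially the same route as the paper: define $\tilde{\pi}(s,f) = f\circ\pi_s$, verify it lands in $A_f(X)$, unfold $\tilde{\pi}(s,\tilde{\pi}(t,f))(x) = f(\pi_t(\pi_s(x)))$, apply the action axiom for $\pi$, pass $f$ through the $X$-valued integral using that $f$ is continuous and affine (the barycenter property), and invoke commutativity to replace $p_t*p_s$ by $p_s*p_t$. Your explicit flagging of the Pettis/weak interpretation of the vector-valued integral is a point the paper glosses over with the phrase ``since $f$ is affine on $X$,'' but it is the same argument.
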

\begin{proof}
We define $\tilde{\pi}: K \times A_f(X) \ra A_f(X)$ as
$$\tilde{\pi}(s, f)(x) := f(\pi(s, x)),$$
for each $s\in K, f\in A_f(X), x\in X$. First note that $\tilde{\pi}$ is well-defined since both $\pi_s$ and  $f:X\ra \mathbb{C}$ are affine for each $s\in K$. Moreover, if $K$ has an identity $e$, then for each $f\in A_f(X), x\in X$, we have $\tilde{\pi}_e(f)(x) = f(\pi_e(x))=f(x)$. Now for any $s, t \in K$, $f\in A_f(X)$, $x\in X$ we have:
\begin{eqnarray*}
\tilde{\pi}(s, \tilde{\pi}(t, f))(x) \ = \ \tilde{\pi}_s(\tilde{\pi}_t(f))(x) &=& \tilde{\pi}_t (f)(\pi_s(x))\\
&=& f(\pi_t(\pi_s(x)))\\
&=& f \Big{(} \int_K \pi_\zeta(x) \ d(p_t*p_s)(\zeta)\Big{)}\\
&=& \int_K f(\pi_\zeta(x)) \ d(p_t*p_s)(\zeta)\\
&=& \int_K \tilde{\pi}_\zeta (f)(x) \ d(p_t*p_s)(\zeta) \ = \ \int_K \tilde{\pi}(\zeta,  f)(x) \ d(p_s*p_t)(\zeta).\\
\end{eqnarray*}
where the third last equality follows since $f$ is affine on $X$. Also, since $A_f(X) \subset C(X)$, we have that $\tilde{\pi}$ is (separately) continuous if $\pi$ is so.
\end{proof}

Now we proceed to derive a sufficient condition for a semihypergroup action to attain a fixed point. We see that a (separately) continuous action of a commutative   semihypergroup $K$ on a compact convex subset $X$ of $(E,Q)$ will always attain a fixed point. This is a direct consequence of the fact we showed previously in \cite{CB1}, that any commutative semihypergroup $K$ is left amenable. Hence in particular, $K$ will admit a left-invariant mean on the spaces of (weakly) right uniformly continuous functions and almost periodic functions on $K$. The proof  is inspired by techniques used in Rickert's Fixed Point Theorem \cite{RI}, proved for topological groups.

\begin{theorem} \label{main2}
Let $K$ be a commutative (semitopological) semihypergroup and $X$ be a compact convex subset of a locally convex Hausdorff vector space $(E, Q)$. Then any separately continuous affine action of $K$ on $X$ has a fixed point.
\end{theorem}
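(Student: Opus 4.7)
The plan is to exploit the left amenability of commutative semihypergroups established in \cite{CB1} to build an invariant mean on $A_f(X)$, and then to use the affine homeomorphism $X \cong \mathcal{M}(A_f(X))$ recalled in \Cref{remaff} to realize this mean as evaluation at a point $x^* \in X$; left invariance will then force $x^*$ to be a fixed point of $\pi$. To set this up, fix any $x_0 \in X$ and, for each $f \in A_f(X)$, define the orbit function $\phi_f : K \to \mathbb{C}$ by $\phi_f(s) := f(\pi(s, x_0))$. Separate continuity of $\pi$ gives continuity of $s \mapsto \pi(s, x_0)$, so $\phi_f \in C(K)$ with $\|\phi_f\|_u \leq \|f\|_\infty$ since $X$ is compact. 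Because $f$ is affine and continuous, it commutes with barycenters of probability measures, and the action axiom yields the key translation identity
\[ L_s \phi_f(t) = \phi_f(s * t) = \int_K f(\pi_\zeta(x_0))\, d(p_s * p_t)(\zeta) = f(\pi_s(\pi_t(x_0))) = \phi_{\tilde{\pi}_s f}(t), \]
so $L_s \phi_f = \phi_{\tilde{\pi}_s f}$, where $\tilde{\pi}$ is the induced affine action of $K$ on $A_f(X)$ supplied by \Cref{inact}.

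The next task is to place each $\phi_f$ in a translation-invariant subspace $\F$ of $C(K)$ on which $K$ admits a left invariant mean. Since $K$ is commutative, \cite{CB1} furnishes an LIM on both the almost periodic functions and the weakly right uniformly continuous functions; the latter is the natural target here. Indeed, the computation $R_s \phi_f(t) = f(\pi_t(\pi_s(x_0)))$, together with separate continuity of $\pi$, compactness of $X$, and a dominated-convergence argument applied to finite Borel measures against $C(K)$, shows that $s \mapsto R_s \phi_f$ is weakly continuous from $K$ into $C(K)$. With an LIM $m$ on such $\F$ in hand, define $\Phi : A_f(X) \to \mathbb{C}$ by $\Phi(f) := m(\phi_f)$. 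Linearity and positivity of both $f \mapsto \phi_f$ and $m$, together with $\phi_{_1} \equiv 1$, yield $\Phi(1) = 1$, $|\Phi(f)| \leq \|f\|_\infty$, and positivity, so $\Phi$ is a mean on $A_f(X)$; by \Cref{remaff} there is a unique $x^* \in X$ with $\Phi(f) = f(x^*)$ for every $f \in A_f(X)$.

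Finally, for each $s \in K$ and $f \in A_f(X)$, the translation identity and the left invariance of $m$ give
\[ f(\pi(s, x^*)) = \tilde{\pi}_s f(x^*) = \Phi(\tilde{\pi}_s f) = m(\phi_{\tilde{\pi}_s f}) = m(L_s \phi_f) = m(\phi_f) = f(x^*), \]
and since $A_f(X)$ separates the points of $X$, this forces $\pi(s, x^*) = x^*$, completing the proof. The main technical obstacle is the middle step: confirming that $\phi_f$ actually lies in a function space on which commutativity produces a LIM. Because $\pi$ is only separately continuous, one cannot expect $\phi_f \in UC(K)$ in the usual sense, so the argument must proceed within the weaker class of weakly right uniformly continuous functions, and the weak continuity of $s \mapsto R_s\phi_f$ has to be handled delicately, using that finite Borel measures on $K$ pair continuously with the bounded, pointwise convergent family produced from the separate continuity of $\pi$.
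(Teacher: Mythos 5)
Your argument is, in essence, the paper's own proof: you construct the orbit map $f\mapsto \phi_f$ at a base point $x_0$, establish the intertwining identity $L_s\phi_f=\phi_{\tilde{\pi}_s f}$ with the induced action of \Cref{inact}, push an invariant mean forward to a mean $\Phi$ on $A_f(X)$, and identify $\Phi$ with a point of $X$ via the affine homeomorphism of \Cref{remaff}. The one place you diverge is precisely the step you flag as the main obstacle: the paper does not attempt to place $\phi_f$ in $UC(K)$ or in the weakly right uniformly continuous functions at all --- it simply invokes the fact from \cite{CB1} that a commutative semihypergroup admits a translation-invariant mean on all of $C(K)$, so the continuity and boundedness of $\phi_f$ (which you already have from separate continuity of $\pi$ and compactness of $X$) suffice. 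Your WRUC detour is therefore unnecessary, and as written it is also the weakest link: ``a dominated-convergence argument applied to finite Borel measures against $C(K)$'' does not establish weak continuity of $s\mapsto R_s\phi_f$, both because pointwise convergence along a net is not enough for dominated convergence and because $C(K)^*$ need not consist of measures when $K$ is non-compact. Replace that step by the invariant mean on $C(K)$ and your proof closes exactly as the paper's does.
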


\begin{proof}
Let $\pi$ be a separately continuous affine action of $K$ on $X$. Recall from \Cref{inact} that $\pi$ naturally induces an affine action $\tilde{\pi}$ of $K$ on $A_f(X)$ where for each $s\in K, f\in A_f(X), x\in X$ we have $$\tilde{\pi}_s (f)(x) = f(\pi_s(x)).$$
Now for each $x\in X$, we define a function $\phi_x:A_f(X)\ra C(K)$ as
$$\phi_x (f)(s) := f(\pi_s(x)),$$
for each $f\in A_f(X), s\in K$. Since $\pi$ is separately continuous on $K\times X$, for any fixed $x\in X$ and a net $(s_\alpha)$ in $K$ such that $s_\alpha \ra s$ for some $s\in K$, we have that $\pi_{s_\alpha}(x)\ra \pi_s(x)$ in $X$. Since $f$ is continuous on $X$, we have that  $\phi_x(f)(s_\alpha) = f(\pi_{s_\alpha}(x)) \ra f(\pi_s(x)) = \phi_x(f)(s)$, \textit{i.e,} $\phi_x(f)$ is indeed continuous on $K$, which is also bounded as $f\in C(X)$.

For each $s\in K, f\in A_f(K), x \in X$ we further have that:
\begin{eqnarray*}
L_s\phi_x(f) (t) &=& \phi_x(f)(s*t)\\
&=& \int_K \phi_x(f)(\zeta) \ d(p_s*p_t)(\zeta)\\
&=& \int_K f(\pi_\zeta(x)) \ d(p_s*p_t)(\zeta)\\
&=& f \Big{(}\int_K \pi_\zeta(x) \ d(p_s*p_t)(\zeta) \Big{)}\\
&=& f(\pi_s(\pi_t(x))) \ = \ \tilde{\pi}_s(f)(\pi_t(x)) \ = \ \phi_x(\tilde{\pi}_s(f))(t),
\end{eqnarray*}
for each $t\in K$, where the fourth equality holds true since $f$ is affine. Hence we have that 
\begin{equation}\label{orbiteq}
L_s\phi_x(f) = \phi_x(\tilde{\pi}_s(f))
\end{equation}
for each $s\in K$, \textit{i.e,} the translation-orbit of the function $\phi_x(f)\in C(K)$ coincides with the image of the $\tilde{\pi}$-orbit of $f$, under the map $\phi_x$.

Since $K$ is commutative, we know \cite{CB1} that $K$ is amenable, \textit{i.e,} there exists a translation invariant mean $m$ on $C(K)$. Fix  any $x_0\in X$, and consider the map $m \circ \phi_{x_0} : A_f(X)\ra \mathbb{C}$. Note that $m \circ \phi_{x_0}$ is also a mean on $A_f(X)$ since $\phi_{x_0}$ is a  bounded linear map and $A_f(X)\subset C(X)$. Hence it follows from \Cref{remaff} that there exists some $z_0\in X$ such that $m \circ \phi_{x_0}= \varepsilon_{z_0}$.

In particular,  since $\phi_{x_0}(f) \in C(K)$ for any $f\in A_f(X)$, we have that $m(L_s \phi_{x_0}(f)) = m(\phi_{x_0}(f))$ for each $s\in K$. Hence using the interplay between orbits derived in \Cref{orbiteq} we have that 
\begin{eqnarray*}
  \ \ \ \ \  m(\phi_{x_0}(\tilde{\pi}_s(f))) &=& m(\phi_{x_0}(f)),\\
 \Rightarrow \ (m\circ \phi_{x_0}) (\tilde{\pi}_s(f)) &=& (m\circ \phi_{x_0}) (f),\\
  \Rightarrow \hspace{0.5in} \varepsilon_{z_0} (\tilde{\pi}_s(f)) &=& \varepsilon_{z_0} (f),\\
 \Rightarrow \ \hspace{0.5in}  f(\pi_s(z_0)) &=& f(z_0),
\end{eqnarray*}
for each $f\in A_f(X)$, $s\in K$. Hence in particular, we must have that $\pi(s, z_0)=z_0$ for each $s\in K$, as required.
\end{proof}

On the other hand, recall \cite{CB1} that the space of almost periodic functions $AP(K)$ is a translation-invariant, norm-closed linear subspace of $C(K)$ such that $AP(K)\subseteq UC(K)$. In the following result, we see that the fixed point property outlined in the statement of \Cref{main2} is a sufficient condition for any semihypergroup $K$ to admit a left invariant mean on $AP(K)$. The idea of the proof is similar to the standard techniques used in \cite{MIT} for the case of locally compact semigroups, although the structures of the function spaces used are substantially different. 

\begin{theorem}\label{main3}
Let $K$ be a (semitopological) semihypergroup such that any jointly continuous affine action $\pi$ of $K$ on a compact convex subset $X$ of a locally convex vector space $(E, Q)$ has a fixed point. Then there exists a LIM on $AP(K)$.
\end{theorem}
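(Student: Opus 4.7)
The plan is to realize the desired LIM as a fixed point of a natural affine action of $K$ on a canonical compact convex set, in the spirit of the classical semigroup proofs. I would take $X := \mathcal{M}(AP(K))$, equipped with the relative weak$^*$-topology inherited from $AP(K)^*$. Since $X$ is the intersection of the closed unit ball $\B_1(AP(K)^*)$ with the weak$^*$-closed hyperplane $\{m : m(1)=1\}$, the Banach--Alaoglu theorem shows that $X$ is a compact convex subset of the locally convex space $(AP(K)^*, \mathrm{weak}^*)$.

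Next I would define $\pi : K \times X \to X$ by
\begin{equation*}
\pi(s, m)(f) := m(L_s f), \qquad s\in K,\ m\in X,\ f\in AP(K).
\end{equation*}
This lands in $X$ because $AP(K)$ is left translation-invariant, $L_s 1 = 1$ (each $p_s * p_y$ being a probability measure), and $\|L_s f\|_u \le \|f\|_u$. To verify that $\pi$ satisfies the action identity of \Cref{action0}, the essential computation uses associativity of $*$ on $M(K)$: expanding via $p_s * (p_t * p_y) = (p_s * p_t) * p_y$ produces the pointwise identity
\begin{equation*}
L_t L_s f(y) \ = \ \int_K L_\zeta f(y) \ d(p_s * p_t)(\zeta), \qquad y \in K.
\end{equation*}
Since $AP(K)\subseteq UC(K)$, the map $\zeta \mapsto L_\zeta f$ is norm-continuous into $AP(K)$, so the right-hand side exists as an $AP(K)$-valued Pettis integral that commutes with the bounded linear functional $m$. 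Applying $m$ therefore gives $\pi_s(\pi_t m)(f) = m(L_t L_s f) = \int_K m(L_\zeta f)\, d(p_s * p_t)(\zeta) = \int_K \pi_\zeta m(f)\, d(p_s * p_t)(\zeta)$, which is exactly the required identity.

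Affineness of $\pi$ is immediate, since each $\pi_s$ is the restriction to $X$ of the linear adjoint of $L_s$. For joint continuity, suppose $(s_\alpha, m_\alpha) \to (s, m)$ in $K \times X$; for each $f \in AP(K)$,
\begin{equation*}
|\pi_{s_\alpha} m_\alpha(f) - \pi_s m(f)| \ \le \ \|L_{s_\alpha} f - L_s f\|_u + |(m_\alpha - m)(L_s f)|,
\end{equation*}
where the first term vanishes because $s \mapsto L_s f$ is norm-continuous (as $f \in UC(K)$) and the second vanishes by the weak$^*$-convergence $m_\alpha \to m$. Thus $\pi$ is a jointly continuous affine action of $K$ on the compact convex subset $X$ of $(AP(K)^*, \mathrm{weak}^*)$, and the hypothesis furnishes a fixed point $m_0 \in X$. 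Unpacking $\pi_s m_0 = m_0$ yields $m_0(L_s f) = m_0(f)$ for every $s \in K$ and $f \in AP(K)$, so $m_0$ is a LIM on $AP(K)$.

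The step I expect to be the main obstacle is the verification of the integral form of the action identity: it requires carefully interchanging $m$ with integration against $d(p_s * p_t)$, and this is precisely where the inclusion $AP(K) \subseteq UC(K)$ (in particular the left uniform continuity of every $f \in AP(K)$) is essential, both to make the vector-valued integral well-defined and to guarantee its good behaviour under the bounded linear functional $m$.
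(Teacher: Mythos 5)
Your proposal is correct and follows essentially the same route as the paper: the same choice of $X=\mathcal{M}(AP(K))$ with the weak$^*$-topology, the same action $\pi(s,m)(f)=m(L_sf)$, the same use of associativity of $*$ to verify the integral action identity, and the same appeal to $AP(K)\subseteq UC(K)$ for joint continuity. Your explicit remark on justifying the interchange of $m$ with the vector-valued integral is a slightly more careful phrasing of the step the paper dispatches by noting $m\in AP(K)^*$, but it is the same argument.
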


\begin{proof}
In particular, set $E:= AP(K)^*$ and equip it with the weak$^*$-topology. Set $X:= \mathcal{M}(AP(K))$. Since $X\subset \mathcal{B}_1(AP(K)^*)$, we have that $X$ is compact in the induced weak$^*$-topology. Now consider the map $\pi: K\times X \ra X$ given by $$\pi(s, m)(f) = \pi_s(m) (f) := m(L_sf)$$
for each $s\in K, m\in X, f\in AP(K)$. Note that for any $s, t \in K$, we have
\begin{eqnarray*}
\int_K L_\zeta f(y) \ d(p_s*p_t)(\zeta) &=& \int_K R_yf(\zeta) \ d(p_s*p_t)(\zeta)\\
&=& \int_K f \ d(p_s*p_t*p_y)\\
&=& f(s*t*y)\\
&=& L_sf(t*y) \ = \ (L_t\circ L_s)(y),
\end{eqnarray*} 
for each $y\in K$. Hence $\pi$ is indeed an action of $K$ on $X$ since for each $s, t\in K, m\in X$, $f\in AP(K)$ we have that
\begin{eqnarray*}
\pi(s, \pi(t, m))(f) &=& \pi_t(m)(L_s f)\\
&=& m(L_t \circ L_s)(f)\\
&=& m\Big{(} \int_K L_\zeta f \ d(p_s*p_t)(\zeta) \Big{)}\\
&=&  \int_K m (L_\zeta f) \ d(p_s*p_t)(\zeta) \ = \ \int_K \pi(\zeta, m)(f) \ d(p_s*p_t)(\zeta),
\end{eqnarray*}
where the fourth equality follows since $m \in AP(K)^*$. Now pick any $s\in K, m\in X$. Let $(s_\alpha)$ and $(m_\beta)$ be sequences in $K$ and $X$ respectively, such that $s_\alpha \ra s$ and $m_\alpha \ra m$ in $K$ and $X$ respectively. For each $f\in AP(K) \subseteq UC(K)$ we have that $||(L_{s_\alpha}f - L_sf)||_u \ra 0$. Hence for each $f\in AP(K)$ we have that
\begin{eqnarray*}
\lim_{\alpha, \beta} |\pi(s_\alpha, m_\alpha)(f) - \pi(s, m)| &=& \lim_{\alpha, \beta} |m_\beta(L_{s_\alpha}f) - m(L_sf)|\\
&\leq & \lim_{\alpha, \beta} |m_\beta(L_{s_\alpha}f - L_sf)| + \lim_{\alpha, \beta} |(m_\beta - m)(L_sf)|\\
&\leq & \lim_{\alpha, \beta} ||m_\beta|| \: || (L_{s_\alpha}f - L_sf)||_u + \lim_\beta ||(m_\beta - m)|| \: ||L_sf||_u\\
&=& \lim_\alpha || (L_{s_\alpha}f - L_sf)|| + \lim_\beta ||(m_\beta - m)|| \: ||f||_u \ \longrightarrow \ 0,
\end{eqnarray*}
since $f$ is bounded on $K$. Hence we have that the action $\pi$ is jointly continuous on $K\times X$. Since $\pi$ is also affine by construction, we have that $\pi$ admits a fixed point, \textit{i.e,} there exists some $m_0\in X$ such that $\pi(s, m_0) = m_0$ for each $s\in K$. Thus we have that $$m_0(f) = \pi(s, m_0)(f) = m_0(L_sf),$$ for each $s\in K, f\in AP(K)$, \textit{i.e, } $m_0$ is a left invariant mean on $AP(K)$.
\end{proof}

Next, in the final part of the section, we consider a different fixed-point property involving measure algebra-homomorphisms, that uses \Cref{action1} for semihypergroup actions. In particular, we consider the topology $\tau_{_{AP(K)}}:=\sigma(M(K), AP(K))$ on $M(K)$ induced by $AP(K)$, and investigate the fixed points of $\tau_{_{AP(K)}}$-continuous actions of $K$ on a separated locally convex space $(E, Q)$, with respect to the space of probability measures $P(K)$. Before we proceed further along this line of investigation, we first discuss a couple of lemmas regarding the density properties of evaluation maps $\varepsilon_x$, $x\in K$, in the closed unit ball of a norm-closed subspace of $C(K)$,  which are imperative to the proof of the next main results.  

Recall \cite{BN} that a  subset $E$ of a locally convex space $V$ is called \textit{balanced} or \textit{circled} if for any $x\in E$, we have that  $\alpha x\in E$ for each $\alpha \in \mathbb{F}$ such that $|\alpha|\leq 1$, where $\mathbb{F}= \mathbb{C} \ (\mbox{or } \mathbb{R})$ is the associated field of scalars. Hence the\textit{ balanced convex hull} or \textit{circled convex hull} of $E$ is the smallest balanced, convex set in $V$ that contains $E$, and is denoted as $cco(E)$. It can be shown\cite{BN} easily that
	$$cco(E) = \Big{\{}\sum_{i=1}^n \alpha_ix_i : x_i\in E, \alpha_i\in \mathbb{F} \mbox{ for } 1\leq i \leq n; \ n \in \mathbb{N} \mbox{ and } \sum_{i=1}^n |\alpha_i| \leq 1 \Big{\}}.$$
	Note that although $cco(E)$ equals the convex hull of the smallest balanced set in $V$ containing $E$, it may strictly contain the smallest balanced set in $V$ containing the convex hull of $E$ \cite{BN}.

	The following couple of results hold true even when the semihypergroup $K$ is substituted by any locally compact Hausdorff space, in general. We include brief proofs here for convenience of readers.
	
	\begin{lemma}\label{genlem}
		Let $\F$ be a norm-closed, conjugate-closed linear subspace of $C(K)$. Then $cco(\varepsilon(K))$ is weak$^*$-dense in $\mathcal{B}_1(\F^*)$.
	\end{lemma}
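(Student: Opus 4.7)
The plan is to deduce the lemma from the bipolar theorem applied to the canonical dual pairing $\langle \F, \F^*\rangle$, with $\F^*$ carrying the weak$^*$-topology $\sigma(\F^*, \F)$.

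First I would check the trivial containment: since $|\varepsilon_x(f)| = |f(x)| \leq ||f||_u$ for every $f \in \F$ and $x \in K$, each $\varepsilon_x$ lies in $\mathcal{B}_1(\F^*)$. The unit ball $\mathcal{B}_1(\F^*)$ being convex and balanced, this immediately forces $cco(\varepsilon(K)) \subseteq \mathcal{B}_1(\F^*)$, and hence the same containment for its weak$^*$-closure.

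Next I would compute the relevant polars. In the pairing $\langle \F, \F^*\rangle$, the polar of $\varepsilon(K)$ in $\F$ is
$$\varepsilon(K)^\circ = \{f \in \F : |f(x)| \leq 1 \text{ for all } x \in K\} = \{f \in \F : ||f||_u \leq 1\},$$
which is exactly the closed unit ball of $\F$. Taking one further polar (now in $\F^*$) yields
$$\varepsilon(K)^{\circ\circ} = \{\psi \in \F^* : |\psi(f)| \leq 1 \text{ for every } f \in \F \text{ with } ||f||_u \leq 1\} = \mathcal{B}_1(\F^*).$$
By the bipolar theorem in $(\F^*, \sigma(\F^*, \F))$, the set $\varepsilon(K)^{\circ\circ}$ is precisely the weak$^*$-closed balanced convex hull of $\varepsilon(K)$, which coincides with $\overline{cco(\varepsilon(K))}^{w^*}$ since $cco(\varepsilon(K))$ is by construction already balanced and convex. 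This gives the claimed equality.

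The argument is essentially a direct appeal to a standard duality tool, so I do not foresee a significant obstacle; the only bookkeeping needed is to verify that $cco(\varepsilon(K))$ is already balanced and convex (so the bipolar recovers its weak$^*$-closure and nothing larger) and that the sup-norm on $\F$ is in fact captured by the family of evaluation functionals $\{\varepsilon_x\}_{x\in K}$. As a concrete sanity check and a self-contained alternative, I would sketch the parallel Hahn--Banach separation argument: if some $\phi \in \mathcal{B}_1(\F^*)$ lay outside $C := \overline{cco(\varepsilon(K))}^{w^*}$, then since $C$ is weak$^*$-closed, convex and balanced, strict separation in $(\F^*, w^*)$ would produce an $f \in \F$ with $\mathrm{Re}\,\phi(f) > \sup_{\psi\in C} \mathrm{Re}\,\psi(f)$; balancedness of $C$ then upgrades this to $|\phi(f)| > \sup_{x\in K}|f(x)| = ||f||_u$, contradicting $||\phi|| \leq 1$. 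Note that conjugate-closedness of $\F$ plays no role in either route.
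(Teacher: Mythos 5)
Your proposal is correct. The main route via the bipolar theorem is a packaged version of exactly the argument the paper carries out by hand: the paper assumes $\phi\in\mathcal{B}_1(\F^*)\setminus\overline{cco}^{w^*}(\varepsilon(K))$, applies the strong separation theorem in $(\F^*,w^*)$, explicitly reconstructs a complex-linear weak$^*$-continuous functional $\rho$ from the real part of the separating functional, identifies it with an element $f_0\in\F$, and then uses balancedness of the closure of $cco(\varepsilon(K))$ to show $\|f_0\|_u\le 1$ while $|\phi(f_0)|>1$ --- which is precisely your ``sanity check'' separation sketch. Your polar computations $\varepsilon(K)^\circ=\{f\in\F:\|f\|_u\le 1\}$ and $\varepsilon(K)^{\circ\circ}=\mathcal{B}_1(\F^*)$ are correct (the absolute polar is the right one here, so the bipolar is the weak$^*$-closed absolutely convex hull, which is $\overline{cco(\varepsilon(K))}^{w^*}$ by the paper's own description of $cco$), and $(\F,\F^*)$ is indeed a dual pair. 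What your route buys is brevity and the avoidance of the slightly delicate bookkeeping in the paper (e.g.\ the verification that the separation constant $\alpha$ is strictly positive, and the real-to-complex linearization); what the paper's route buys is self-containedness, since it only invokes the separation theorem rather than the bipolar theorem. Your observation that conjugate-closedness of $\F$ is not used is also accurate --- the paper does not use it in this lemma either; it is relevant only for the subsequent results.
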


\begin{proof}
Let $A:= \overline{cco}^{w^*}(\varepsilon(K))$. Now if possible, let there exists some $\phi\in \mathcal{B}_1(\F^*)\setminus A$. Let $V$ denote the locally convex topological vector space $\F^*$ equipped with the  weak$^*$-topology.  Since $A \subset \mathcal{B}_1(\F^*)$, $A$ is a compact convex subset of $V$. Hence using the strong separation theorem for locally convex spaces \cite[Theorem 7.8.6]{BN}, we can find a weak$^*$-continuous linear functional $\omega$ on $\F^*$ such that $$\alpha:=\sup\{Re(\omega(\psi)): \psi\in A\} < Re(\omega(\phi)).$$
	Since $0\in A$ and $A$ is balanced, we know that $Re(\omega(\phi))>\alpha>0$. Now, we define a weak$^*$-continuous map $\rho:\F^*\ra \mathbb{C}$ as $$\rho(\psi):= \frac{1}{\alpha} Re(\omega(\psi)) - \frac{i}{\alpha} Re(\omega(i\psi)),  $$
	for each $\psi \in \F^*$. Note that $\rho$ is a linear functional since for any $\psi\in \F^*$, we have 
$$\rho(i\psi)= \frac{i}{\alpha} Re(\omega(\psi)) + \frac{1}{\alpha} Re(\omega(i\psi)) = i\rho(\psi).$$
Since $(V,\F)$ is a dual pair, there exists \cite{BN} an unique $f_0\in \F$ such that $$\rho(\psi)= \langle \psi, f_0\rangle = \psi(f_0)$$
for each $\psi \in \F^*$. Hence  in particular, we have that
\begin{eqnarray*}
|\phi(f_0)| \ = \ |\langle \phi, f_0 \rangle| &=& |\rho(\phi)|\\
&\geq& \frac{1}{\alpha} Re(\omega(\phi)) \ > \ 1.
\end{eqnarray*}	
But this poses a contradiction to the fact that $||\phi||<1$, since we have that $||f_0||\leq 1$ as well, which can be verified as the following:
\begin{eqnarray*}
	Re(\langle \psi, f_0\rangle) &=& Re (\rho(\psi))\\
	&=& \frac{1}{\alpha} Re(\omega(\psi)) \ \leq 1,
		\end{eqnarray*}
for each $\psi \in A$. Since $A$ is balanced, we have that $Re(\langle e^{i\theta}\psi, f_0\rangle) \leq 1$ for each $\theta \in \mathbb{R}$, and hence $|\langle \psi, f_0\rangle|\leq 1$ for each $\psi\in A$. Thus in particular, for each $x\in K$ we have that $$|f_0(x)|=|\langle \varepsilon_x, f_0\rangle| \leq 1.$$ 
\end{proof}

\begin{lemma} \label{genlem0}
Let $\F$ be a norm-closed, conjugate-closed linear subspace of $C(K)$. Then $\{\ep_\mu: \mu\in P(K)\}$ is weak$^*$-dense in the set $\mathcal{M}(\F)$ of all means on $\F$.
\end{lemma}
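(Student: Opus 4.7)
The plan is to adapt the strong-separation strategy used in the proof of \Cref{genlem}, but now replace the role played by the balanced convexity of $cco(\ep(K))$ with the normalization conditions $\|m\|=m(1)=1$ that single out means inside $\mathcal{B}_1(\F^*)$. I will set $P:=\{\ep_\mu:\mu\in P(K)\}$ (a convex subset of $\mathcal{M}(\F)$, since $t\mu+(1-t)\nu\in P(K)$ for $\mu,\nu\in P(K),\ t\in[0,1]$, and each $\ep_\mu$ clearly has norm and $1$-value equal to $1$) and let $A:=\overline{P}^{\,w^*}$. The goal is to show $\mathcal{M}(\F)\subseteq A$, which I will attempt by contradiction.

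Assuming there exists $m_0\in\mathcal{M}(\F)\setminus A$, I would apply the strong separation theorem \cite[Theorem 7.8.6]{BN} in the locally convex space $(\F^*, w^*)$: since $A$ is weak$^*$-closed and convex, and $\{m_0\}$ is weak$^*$-compact, this produces a weak$^*$-continuous linear functional on $\F^*$, which by the duality of $\F^*$ with $\F$ must be of the form $\phi\mapsto\phi(f)$ for some $f\in\F$, together with $\alpha\in\mathbb{R}$ satisfying
\[
\mathrm{Re}(\phi(f))\le\alpha<\mathrm{Re}(m_0(f))\quad\text{for every }\phi\in A.
\]
Specializing $\phi=\ep_{p_x}$ for each $x\in K$ forces $\mathrm{Re}(f(x))\le\alpha$ pointwise. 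Since $\F$ is conjugate-closed, the real-valued function $g:=\mathrm{Re}(f)=(f+\bar f)/2$ belongs to $\F$ and satisfies $g\le\alpha$ on $K$.

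To close the loop I would then establish the standard intermediate fact that any mean $m$ on $\F$ is real-valued on real elements of $\F$ and satisfies $\inf_K g\le m(g)\le\sup_K g$ for each real $g\in\F$. Both statements follow from $m(1)=\|m\|=1$: writing $m(g)=a+ib$, the inequality $|m(g+it\cdot 1)|\le\|g+it\cdot 1\|_u=\sqrt{\|g\|_u^2+t^2}$ holding for all $t\in\mathbb{R}$ forces $b=0$, while taking $c=(\sup g+\inf g)/2$ and $r=(\sup g-\inf g)/2$ one has $\|g-c\cdot 1\|_u\le r$, so $|m(g)-c|\le r$, and realness pins $m(g)$ into $[\inf g,\sup g]$. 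Applied to $m_0$, this gives $m_0(\bar f)=\overline{m_0(f)}$ (splitting $f$ into real and imaginary parts), whence $m_0(g)=\mathrm{Re}(m_0(f))$; combined with $m_0(g)\le\sup g\le\alpha$, this contradicts $\mathrm{Re}(m_0(f))>\alpha$ and finishes the proof.

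The main obstacle is precisely this positivity observation: means in the sense of this paper are prescribed only by the two norming conditions and are not postulated to be positive linear functionals, so one has to extract real-valuedness on real functions and the order bound from $\|m\|=m(1)=1$ by hand. Conjugate-closedness of $\F$ is indispensable at this point, since it is what lets me pass from the arbitrary separating $f\in\F$ to its real part and to manipulate $\bar f$ as a genuine element of $\F$.
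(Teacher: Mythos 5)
Your argument is correct, and it reaches the conclusion by a genuinely different route than the paper. The paper deduces the lemma from \Cref{genlem}: it takes a net in $cco(\varepsilon(K))$ converging weak$^*$ to the given mean $m$, invokes the positivity of $m$ to argue that the corresponding finitely supported measures are eventually non-negative with total mass tending to $1$, and then normalizes them into $P(K)$. You instead run a self-contained strong-separation argument: a mean $m_0$ outside $\overline{\{\ep_\mu:\mu\in P(K)\}}^{w^*}$ is separated by some $f\in\F$, conjugate-closedness lets you pass to $g=\mathrm{Re}(f)\in\F$ with $g\le\alpha$ on $K$, and the order bound $\inf_K g\le m_0(g)\le\sup_K g$ — which you correctly extract from $\|m_0\|=m_0(1)=1$ via the $g+it\cdot 1$ trick and the recentering $g-c\cdot 1$ — yields the contradiction. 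Your version has two advantages: it makes explicit the positivity/real-valuedness of means on real functions, which the paper asserts without proof, and it sidesteps the paper's somewhat delicate claim that weak$^*$ convergence $\ep_{\mu_\alpha}\to m$ with $m$ positive forces the $\mu_\alpha$ to be \emph{eventually} positive (as stated, that step needs more care, since nearby approximants may carry small negative parts). The paper's route is shorter on the page because it reuses \Cref{genlem}, whereas yours re-proves the separation step in the restricted setting; both uses of conjugate-closedness are essential and play the same role of making $\mathrm{Re}(f)$ available inside $\F$.
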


\begin{proof}
Pick any $m\in \mathcal{M}(\F)\subseteq \mathcal{B}_1(\F^*)$. Using \Cref{genlem} and \Cref{remaff}, we get a net $\{\mu_\alpha\}$ in $M(K)$ such that  $\varepsilon_{\mu_\alpha}\overset{w^*}{\longrightarrow}  m$ on $\F^*$. But we have that $m(f)\geq 0$ for any $f\geq 0$ in $C(K)$ and $m(1)=1$, since $m\in \mathcal{M}(\F)$. Hence we must have that $\mu_\alpha \in M^+(K)$ eventually, and hence $||\mu_\alpha||=\mu_\alpha(K)$ must converge to $1$. 

Hence without loss of generality, we may assume that each $\mu_\alpha \in M^+(K)$. Setting $\nu_\alpha:=\mu_\alpha/||\mu_\alpha||$ we get that $\ep_{{\nu_\alpha}}\overset{w^*}{\longrightarrow} m$, where $\nu_\alpha \in P(K)$ for each $\alpha$.
\end{proof}

Next, we recall some properties of the dual space $AP(K)^*$, in order to use its algebraic structure to provide certain necessary and sufficient conditions for the existence of a left-invariant mean on the space of almost periodic functions.

Let $K$ be a semitopological semihypergroup, and $\mathcal{F}$ be a left (resp. right) translation-invariant linear subspace of $C(K)$. For each $\omega \in \mathcal{F}^*$, the left (resp. right) introversion operator $T_\omega$ (resp. $U_\omega$) determined by $\omega$ is the map $T_\omega : \mathcal{F} \ra B(K)$ (resp. $U_\omega : \mathcal{F} \ra B(K)$) defined as $ T_\omega f(x) := \omega(L_x f)$ (resp. $U_\omega f(x) := \omega(R_x f)$) for each $f\in \F, x\in K$. We say that a left (resp. right) translation-invariant linear subspace $\mathcal{F}$ is left-introverted (resp. right-introverted) if $T_\omega f \in \mathcal{F}$ (resp. $U_\omega f \in \mathcal{F}$) for each $\omega \in \mathcal{F}^*$, $f\in \mathcal{F}$. A translation-invariant subspace $\F$ is called introverted if it is both left and right introverted. 

In a previous work \cite{CB1} we discussed the basic properties of introversion operators for a semitopological semihypergroup, and in turn showed that the function-space $AP(K)$ is translation-invariant and introverted. Hence although the topological space $K$ lacks an algebraic structure on itself, we can define both left and right Arens product \cite{AR} on the dual space $AP(K)^*$. It can be seen easily\cite{CB1} that the left Arens product $\lozenge$ on $AP(K)^*$ coincides with the definition $$(m \lozenge n)(f) := m(T_n(f)),$$
for each $m, n \in AP(K)^*$, $f\in AP(K)$. Similarly, for each $m, n \in AP(K)^*$ the right Arens product $\square$ on $AP(K)^*$ coincides with the definition $$(m \square n)(f) := n(U_m(f)),$$ 
for each $f\in AP(K)$. In fact, it can be shown \cite[Theorem 4.27]{CB1} that $AP(K)^*$ is Arens-regular, i.e, $(m \lozenge n)= (m \square n)$ for any $m, n \in AP(K)^*$. Hence from here onwards, without any loss of generality, we consider the algebra $(AP(K)^*, \star)$ where $(m\star n)(f):= m(T_n(f))$ for each $f\in AP(K)$.

\begin{remark}\label{remw}
For any $\mu, \nu \in M(K)$, we have that $\varepsilon_\mu \star \varepsilon_\nu = \varepsilon_{{\mu * \nu}}$ on $AP(K)$,  since for any $f\in AP(K)$, we have the following:
\begin{eqnarray*}
	(\ep_\mu \star \ep_\nu) (f) &=& \ep_\mu(T_{\ep_\nu}(f)) \\
	&=& \int_K T_{\ep_\nu}(f)(x) \ d\mu(x)\\
	&=& \int_K \ep_\nu(L_xf) \ d\mu(x)\\
	&=& \int_K \int_K L_xf (y) \ d\nu(y) \ d\mu(x)\\
	&=& \int_K \int_K f(x*y) \ d\mu(x) d\nu(y) \ = \int_K f \ d(\mu*\nu) \ = \ \ep_{\mu*\nu}(f),
	\end{eqnarray*}
where the fifth equality holds true since $f$ is bounded on $K$, and both $||\mu||, ||\nu||$ are finite.

\end{remark}

The following results are inspired by the techniques and results proved in \cite{WO}, for locally compact semigroups. Note that unlike locally compact groups and semigroups, here the spaces $LUC(K), RUC(K), UC(K)$ and $AP(K)$ do not necessarily form  sub-algebras of $C(K)$, with respect to the point-wise product.

\begin{theorem}\label{thmequiv}
	Let $K$ be a semitopological semihypergroup. Then the following statements are equivalent.
	\begin{enumerate}
	\item $AP(K)$ admits a LIM.\\
	\item $\exists$ a mean $ m $ on $AP(K)$ such that $\ep_\mu\star m=m$ for each $\mu\in P(K)$.\\
	\item $\exists$ a net $\{\mu_\alpha\}$ in $P(K)$ such that for each $\mu \in P(K)$, the net $\{(\mu * \mu_\alpha - \mu_\alpha)\}$ converges to $0$ in $(M(K), \sigma(M(K), AP(K)))$.
\end{enumerate} 

\smallskip
Moreover, if $m$ is a LIM on $AP(K)$, then  $\ep_\mu\star m=m$  for each $\mu\in P(K)$.
\end{theorem}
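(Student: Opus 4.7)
My plan is to prove the chain $(1) \Rightarrow (2) \Rightarrow (3) \Rightarrow (2) \Rightarrow (1)$, where the first implication will also establish the \emph{moreover} clause. For $(1) \Rightarrow (2)$, if $m$ is a LIM on $AP(K)$, then for any $\mu \in P(K)$ and $f \in AP(K)$ a direct computation using the introversion operator $T_m$ and left-invariance yields
\begin{align*}
(\ep_\mu \star m)(f) \;=\; \ep_\mu(T_m f) \;=\; \int_K m(L_x f)\,d\mu(x) \;=\; \int_K m(f)\,d\mu(x) \;=\; m(f),
\end{align*}
since $\mu(K)=1$. This simultaneously handles $(1)\Rightarrow(2)$ and the moreover statement. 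For the converse $(2)\Rightarrow(1)$, specializing to $\mu = p_x$ and unfolding the definition of $T_m$ gives $m(L_xf) = \ep_x(T_m f) = (\ep_x \star m)(f) = m(f)$ for each $x\in K$ and $f\in AP(K)$, so $m$ is a LIM.

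For $(2)\Rightarrow(3)$, given such a mean $m$, \Cref{genlem0} furnishes a net $\{\mu_\alpha\} \subset P(K)$ with $\ep_{\mu_\alpha} \overset{w^*}{\longrightarrow} m$ in $AP(K)^*$. The pivotal ingredient is Arens-regularity of $(AP(K)^*, \star)$ from \cite[Theorem 4.27]{CB1}, which collapses the left and right Arens products and thereby renders the multiplication separately weak$^*$-continuous in \textit{both} variables. In particular, for each fixed $\mu \in P(K)$, the map $n \mapsto \ep_\mu \star n$ is weak$^*$-continuous, so $\ep_\mu \star \ep_{\mu_\alpha} \overset{w^*}{\longrightarrow} \ep_\mu \star m = m$. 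Combining this with \Cref{remw}, which identifies $\ep_\mu \star \ep_{\mu_\alpha} = \ep_{\mu*\mu_\alpha}$, we obtain $\ep_{\mu*\mu_\alpha - \mu_\alpha}(f) = (\ep_\mu\star\ep_{\mu_\alpha})(f) - \ep_{\mu_\alpha}(f) \to m(f) - m(f) = 0$ for every $f \in AP(K)$, which is precisely the required convergence in $(M(K), \sigma(M(K), AP(K)))$.

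For $(3)\Rightarrow(2)$, the net $\{\ep_{\mu_\alpha}\}$ lies in $\mathcal{M}(AP(K))$, which is weak$^*$-closed in $\mathcal{B}_1(AP(K)^*)$ and hence weak$^*$-compact by Banach--Alaoglu (with positivity and $\ep_{\mu_\alpha}(1)=1$ passing to the limit). Extracting a weak$^*$-convergent subnet $\ep_{\mu_\beta} \overset{w^*}{\longrightarrow} m$ for some mean $m$, the hypothesis on the subnet (still holding, as subnets inherit the convergence) together with \Cref{remw} gives $(\ep_\mu \star \ep_{\mu_\beta})(f) - \ep_{\mu_\beta}(f) \to 0$ for each $f \in AP(K)$ and each $\mu \in P(K)$. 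Invoking weak$^*$-continuity of $n \mapsto \ep_\mu \star n$ once more to pass to the limit yields $(\ep_\mu \star m)(f) = m(f)$ for every $f \in AP(K)$, establishing $(2)$.

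The main obstacle and the common pivot of both nontrivial implications $(2)\Rightarrow(3)$ and $(3)\Rightarrow(2)$ is the Arens-regularity of $(AP(K)^*, \star)$: without it one would only have weak$^*$-continuity of $m \mapsto m \star n$ (from the first Arens product) and the arguments requiring weak$^*$-continuity in the \emph{right} variable would break. Thankfully, this regularity was established previously in the semihypergroup setting \cite[Theorem 4.27]{CB1}, and the rest of the proof then reduces to careful bookkeeping between the embedding $\mu \mapsto \ep_\mu$ of $M(K)$ into $(AP(K)^*, \star)$, the density provided by \Cref{genlem0}, and the explicit computation of left translates under $T_m$.
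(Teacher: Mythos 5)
Your proof is correct and follows the same overall scheme as the paper, $(1)\Rightarrow(2)\Rightarrow(3)\Rightarrow(2)\Rightarrow(1)$ with \Cref{genlem0} and \Cref{remw} doing the same jobs, but two of your steps are genuinely different from the paper's. First, for $(1)\Rightarrow(2)$ and the ``moreover'' clause you observe that left-invariance makes $T_mf$ the constant function $m(f)$, so $(\ep_\mu\star m)(f)=\int_K m(L_xf)\,d\mu(x)=m(f)$ directly; the paper instead approximates $\ep_\mu$ in $\tau_{_{AP(K)}}$ by convex combinations $\sum_i\lambda_i^\alpha p_{x_i^\alpha}$ via \Cref{genlem0} and passes to the limit through the map $\Omega_F$ --- your computation is shorter and avoids that machinery entirely. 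Second, and more substantively, the paper obtains the weak$^*$-continuity of $n\mapsto\ep_\mu\star n$ on norm-bounded sets by a hands-on argument: equicontinuity of $\{T_{\omega_\alpha}f\}$ (using $AP(K)\subseteq UC(K)$), uniform convergence on compacta, integration against compactly supported $\mu$, and a density step to reach general $\mu\in M(K)$. You instead invoke Arens-regularity of $(AP(K)^*,\star)$ from \cite[Theorem 4.27]{CB1}: since $\star$ coincides with the right Arens product $(m\,\square\, n)(f)=n(U_mf)$ and $AP(K)$ is right-introverted (so $U_mf\in AP(K)$), continuity in the second variable is immediate, with no boundedness restriction. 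This is a legitimate and cleaner shortcut given that the paper itself quotes the regularity result; you should just make the reliance on right-introversion explicit, since that (rather than regularity alone) is what makes $n\mapsto m\,\square\, n$ weak$^*$-continuous. What the paper's longer route buys is that it simultaneously establishes the separate $\big(\tau_{_{AP(K)}}\times\text{weak}^*\big)$-continuity of $\Omega_F$ in the first variable as well, which is reused later in the proof of \Cref{mainf}; your argument would need to supply that first-variable continuity separately there (it is immediate from left-introversion, so this is not a gap, merely a bookkeeping difference).
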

\begin{proof}
 $(1)\Rightarrow (2)$: Let $m$ be a LIM on $AP(K)$, and $\mu\in P(K)$. Since $||\ep_\mu||=1= \ep_\mu(1)$, it immediately follows from \Cref{genlem0} that there exists a net $\{\phi_\alpha\}$ in $\{\ep_\nu : \nu \in P(K)\}$ such that $\phi_\alpha\overset{w^*}{\longrightarrow} \ep_\mu$  on $AP(K)^*$. But $\{\ep_\nu : \nu \in P(K)\}=co(\ep(K))$, and hence for each $\alpha$ there exists some $n_\alpha\in \mathbb{N}$ such that $\phi_\alpha = \sum_{i=1}^{n_\alpha} \lambda_i^\alpha \ep_{x_i^\alpha}$, where each $x_i^\alpha\in K$, $0\leq \lambda_i^\alpha\leq 1$ for $1\leq i \leq n_\alpha$ such that $\sum_{i=1}^{n_\alpha} \lambda_i^\alpha  = 1$.  Hence $\sum_{i=1}^{n_\alpha} \lambda_i^\alpha p_{x_i^\alpha}$ converges to $\mu$ in $(M(K), \tau_{_{AP(K)}})$, as for each $f\in AP(K)$,  we have that
 \begin{eqnarray*}
 	\int_K f \ d\Big{(} \sum_{i=1}^{n_\alpha} \lambda_i^\alpha p_{x_i^\alpha}\Big{)} &=& \sum_{i=1}^{n_\alpha} \lambda_i^\alpha f({x_i^\alpha}) \\ 
 	&=& \sum_{i=1}^{n_\alpha} \lambda_i^\alpha \ep_({x_i^\alpha}) (f) \ = \phi_\alpha(f) \ \ra \  \int_K f \ d\mu.\\
 	\end{eqnarray*}
Now, let $F$ be a norm-bounded subset of $AP(K)^*$. Consider the map $\Omega_F: M(K)\times F \ra AP(K)^*$ given by $$\Omega_F (\mu, \omega) := \ep_\mu \star \omega ,$$ 
for each $(\mu, \omega) \in M(K)\times F$. Pick $M>0$ such that $||\rho||\leq M$ for each $\rho\in F$. 

let $\mu_\alpha \longrightarrow \mu$ in $(M(K), \sigma(M(K), AP(K)))$ and $\omega\in AP(K)^*$. Then for each $f\in AP(K)$ we have that
\begin{eqnarray*}
\Omega_F(\mu_\alpha, \omega)(f) &=& (\ep_{\mu_\alpha} \star \omega)(f) \\
&=& \int_K T_\omega f (x) \ d\mu_\alpha(x) \\
&\longrightarrow & \int_K T_\omega f (x) \ d\mu(x) \ = \ \ep_\mu(T_\omega f) \ = \ \Omega_F(\mu, \omega),
\end{eqnarray*}
since $AP(K)$ is left-introverted \cite{CB1}, and hence $T_\omega f \in AP(K)$, as $f\in AP(K)$.

On the other hand, let $\{\omega_\alpha\}$ be any net in $AP(K)^*$ such that $\omega_\alpha \overset{w^*}{\longrightarrow} \omega$ for some $\omega\in AP(K)^*$, and consider the family of functions $\{T_{\omega_\alpha}f\}$ for some $f\in AP(K)$. For any $x, y \in K$ we have
\begin{eqnarray*}
	|T_{\omega_\alpha} f(x) - T_{\omega_\alpha} f(y)| &=& |{\omega_\alpha} (L_xf) - {\omega_\alpha}(L_yf)|\\
	&=& |{\omega_\alpha}(L_xf - L_yf)|\\
	&\leq & ||{\omega_\alpha}||\:  ||L_xf-L_yf||_u \ \leq \ M ||L_xf - L_yf||_u  \ra 0 \mbox{ whenever } x\ra y,
	\end{eqnarray*}
since $AP(K)\subset UC(K)$ \cite{CB1} and hence the map $x\mapsto L_xf: K \ra (C(K), ||\cdot||_u)$ is  continuous for each $f\in AP(K)$.  Hence $\{T_{\omega_\alpha}f\}$ is an equicontinuous family of functions in $(C(K), ||\cdot||_u)$. Therefore the topology of pointwise convergence will coincide with the topology of uniform convergence on compact sets. For each $x\in K$, we have that $$T_{\omega_\alpha}f(x) = \omega_\alpha(L_xf)\ra \omega(L_xf) = T_\omega f(x),$$ since $L_xf\in AP(K)$ and $\omega_\alpha \overset{w^*}{\longrightarrow} \omega$. Hence $T_{\omega_\alpha}f \ra 
  T_\omega f$ uniformly on compact subsets of $K$. Thus for any $\mu\in M_C(K)$ and $f\in AP(K)$ we have: 
 \begin{eqnarray*}
 \Omega_F(\mu, \omega_\alpha)(f) &=& (\ep_\mu \star {\omega_\alpha})(f)\\
 &=& \ep_\mu(T_{\omega_\alpha}f)\\
 &=& \int_K T_{\omega_\alpha}f(x) \ d\mu(x)\\
 &\longrightarrow & \int_K T_\omega f (x) \ d\mu(x) \ = \ (\ep_\mu \star \omega)(f) \ = \ \Omega_F(\mu, \omega)(f),
 \end{eqnarray*}
where the fourth implication holds true since $supp(\mu)$ is compact.  But note that $M_C(K)$ is norm-dense in $M(K)$ and for each $f\in AP(K)$ and any $\alpha$ we have that
$$ ||T_{\omega_\alpha}(f)||_u = \underset{x\in K} {\sup} \ |\omega_\alpha(L_xf)|  \leq \underset{x\in K} {\sup} \ (||\omega_\alpha|| \: ||L_xf||) \leq M ||f||.$$
Hence for each $\mu\in M(K)$ we have that $\Omega_F(\mu, \omega_\alpha) \overset{w^*}{\longrightarrow} \Omega_F(\mu, \omega)$. Thus we see that $\Omega_F$ is separately $\big{(} \tau_{_{AP(K)}} \times \mbox{ weak}^*\mbox{-topology}\big{)}- \mbox{ weak}^*\mbox{-topology}$ continuous.

In particular taking $F=P(K)$ and $\omega = m$,  since $\sum_{i=1}^{n_\alpha} \lambda_i^\alpha p_{x_i^\alpha}$ converges to $\mu$ in $(M(K), \tau_{_{AP(K)}})$, we have that 
$$(\phi_\alpha \star m) = \Big{(}\ep_{_{\sum_{i=1}^{n_\alpha} \lambda_i^\alpha p_{x_i^\alpha} }} \star m \Big{)} \overset{w^*}{\longrightarrow} (\ep_\mu \star m) \mbox{ on } AP(K)^*.$$
But for each $\alpha$, we also have that
\begin{eqnarray*}
(\phi_\alpha \star m)(f) &=& \phi_\alpha (T_m f)\\
&=& \sum_{i=1}^{n_\alpha} \lambda_i^\alpha (T_mf)({x_i^\alpha})\\
&=& \sum_{i=1}^{n_\alpha} \lambda_i^\alpha m(L_{x_i^\alpha}f) \ = \ \sum_{i=1}^{n_\alpha} \lambda_i^\alpha m(f)  \ = \ m(f),
\end{eqnarray*}
for each $f\in AP(K)$. Since the weak$^*$-topology is Hausdorff, thus we must have that $\ep_\mu \star m = m$ on $AP(K)$.

$(2)\Rightarrow(3)$: Let $m$ be a mean on $AP(K)$ such that $\ep_\mu \star m = m$ for each $\mu\in P(K)$. It follows from \Cref{genlem0}  that there exists a net $\{\mu_\alpha\}$ in $P(K)$ such that $\ep_{\mu_\alpha} \overset{w^*}{\longrightarrow} m$ in $AP(K)^*$. Hence in particular, taking $F=P(K)$, we have that $$\ep_\mu \star \ep_{\mu_\alpha} = \Omega_F(\mu, \ep_{\mu_\alpha}) \overset{w^*}{\longrightarrow} \Omega_F(\mu, m) = \ep_\mu \star m = m.$$
Hence for each $\mu\in P(K)$ and $f\in AP(K)$ we have the following.
\begin{eqnarray*}
\int_K f \ d(\mu*\mu_\alpha -\mu_\alpha) &=& \int_K f \ d(\mu*\mu_\alpha) - \int_K f \ d\mu_\alpha\\
&=& \ep_{(\mu*\mu_\alpha)}(f) - \ep_{\mu_\alpha}(f)\\
&=& (\ep_\mu \star \ep_{\mu_\alpha})(f) - \ep_{\mu_\alpha}(f)\\
&\longrightarrow & m -m \ = \ 0,
\end{eqnarray*}
where the third equality follows from \Cref{remw}.

$(3)\Rightarrow (2)$: Let $\{\mu_\alpha\}_{\alpha\in I}$ be a net in $P(K)$ such that for each $\mu \in P(K)$, the net $\{(\mu * \mu_\alpha - \mu_\alpha)\}_{\alpha\in I}$ converges to $0$ in $\tau_{_{AP(K)}}$. Since $\mathcal{M}(AP(K))\subset \mathcal{B}_1(AP(K)^*)$ is weak$^*$-compact, it follows from \Cref{genlem0} that  there exists a weak$^*$-convergent subnet $\{\mu_{\beta}\}_{\beta\in I'}$ of $\{\mu_\alpha\}_{\alpha\in I}$, $I'\subseteq I$, such that $\ep_{\mu_{_\beta}} \overset {w^*}{\longrightarrow } m$ for some $m\in \mathcal{M}(AP(K))$.

Now for each $\mu \in P(K)$, using \Cref{remw} and the continuity of $\Omega_F$ with $F= {P(K)}$, we get that $$\ep_{_{(\mu * \mu_{_\beta})}} = (\ep_\mu \star \ep_{\mu_{_\beta}}) = \Omega_F(\mu, \ep_{\mu_{_\beta}}) \overset {w^*}{\longrightarrow } \Omega_F(\mu, m)= (\ep_\mu \star m).$$
Thus finally, for any $f\in AP(K)$, $\mu\in P(K)$, using the given criterion we have the following, as required.
\begin{eqnarray*}
\big{(}(\ep_\mu \star m) - m \big{)}(f) &=& \lim_\beta{}^{w^*} \ep_{_{(\mu * \mu_{_\beta})}}(f) - \lim_\beta{}^{w^*} \ep_{\mu_{_\beta}}(f)\\
&=& \lim_\beta{}^{w^*} \int_K f \ d(\mu * \mu_{_\beta}) - \lim_\beta{}^{w^*} \int_K f \ d\mu_{_\beta} \\
&=& \lim_\beta{}^{w^*} \int_K f \ d(\mu * \mu_{_\beta} - {\mu_{_\beta}}) \ = \ 0.
\end{eqnarray*}
$(2)\Rightarrow (1)$: Let $m$ be a mean on $AP(K)$ such that $\ep_\mu\star m=m$ for each $\mu\in P(K)$. In particular, pick any $x_0\in K$ and consider $\mu= p_{_{x_0}}$. Then it follows immediately that $m$ is a LIM on $AP(K)$ since for each $f\in AP(K)$ the following holds true:
\begin{eqnarray*}
m(L_{x_0} f) &=& T_m f (x_0)\\
&=& \ep_{p_{_{x_0}}} (T_m f) \ = \ (\ep_{p_{_{x_0}}}\star m)(f) \ = \ \mu(f).
\end{eqnarray*}
\end{proof}


We now proceed to the final result of this section.   For convenience, we abuse notation slightly, and for any Borel subset $\Omega\subseteq M(K)$, we denote the subspace-topology induced by $\tau_{_{AP(K)}}$  on $\Omega$, simply as $\tau_{_{AP(K)}}$. For the rest of this article, we consider (semihypergroup) action in the form of \Cref{action1}, which as discussed before, is equivalent to the standard definition (\Cref{action0}) of a semihypergroup action. Given a semihypergroup action $\pi$ of $K$ on a separated locally convex space $(E, Q)$ and any subset $\Omega \subset M(K)$, we say that a subset $F\subset E$ is $\Omega$-invariant, if we have that $\pi(\mu, x) = \pi_\mu(x) \in F$ for each $\mu \in \Omega, x\in F$. We say that the action $\pi$ has Property $(FP)$ if it satisfies the following property:
\begin{eqnarray*}
	(FP): & & \mbox{ For any compact, convex, } P(K)\mbox{-invariant subset } F \mbox{ of } E, \mbox{ if the induced}\\
	& &  \mbox{   action  } \tilde{\pi}:=\pi{|}_{_{P(K)\times F}}: P(K)\times F \ra F \mbox{ is separately } \tau_{_{AP(K)}} \mbox{-continuous,}\\
	& & \mbox{ then } \tilde{\pi} \mbox{ has a fixed point.}
\end{eqnarray*}

We show that the fixed-point property $(FP)$ completely characterizes the existence of a LIM on the space $AP(K)$ of almost periodic functions on any semitopological semihypergroup $K$.

\begin{theorem}\label{mainf}
Let $K$ be a semitopological semihypergroup. Then $AP(K)$ has a LIM if and only if  any semihypergroup action $\pi$ of $K$ on a separated locally convex space $(E, Q)$ satisfies Property $(FP)$. 
\end{theorem}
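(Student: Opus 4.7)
The plan is to establish the equivalence in two directions: a direct construction for $(\Leftarrow)$ applying Property $(FP)$ to a canonical action on $AP(K)^*$, and a duality-and-averaging argument for $(\Rightarrow)$ which leverages Theorem~\ref{thmequiv}.

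For $(\Leftarrow)$, set $E := AP(K)^*$ equipped with the weak*-topology, and $F := \mathcal{M}(AP(K))$, which is weak*-compact and convex. Define a semihypergroup action $\pi : M(K)\times E \to E$ by $\pi_\mu(m) := \ep_\mu \star m$; the relation $\pi_{\mu*\nu} = \pi_\mu \circ \pi_\nu$ follows from the associativity of $\star$ together with Remark~\ref{remw}. A direct check using positivity and the identity $(\ep_\mu \star m)(\mathbf{1}) = 1$ shows $F$ is $P(K)$-invariant. Separate $\tau_{_{AP(K)}}$-continuity of $\tilde\pi$ splits cleanly using introversion of $AP(K)$: in the first variable, $(\ep_{\nu_\alpha}\star m)(f) = \int T_m f\, d\nu_\alpha$ converges to $(\ep_{\nu}\star m)(f)$ since $T_m f \in AP(K)$; in the second, Arens regularity gives $(\ep_\mu \star m)(f) = m(U_{\ep_\mu} f)$ with $U_{\ep_\mu} f \in AP(K)$, yielding weak*-continuity in $m$. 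Applying $(FP)$ produces $m_0 \in F$ with $\ep_\mu \star m_0 = m_0$ for all $\mu \in P(K)$. Specializing to $\mu = p_x$ and noting that $U_{\ep_{p_x}}f = L_x f$, we obtain $m_0(L_x f) = m_0(f)$, so $m_0$ is a LIM on $AP(K)$.

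For $(\Rightarrow)$, assume a LIM on $AP(K)$ exists, and by Theorem~\ref{thmequiv}(3) fix a net $\{\mu_\alpha\}\subset P(K)$ with $\mu*\mu_\alpha - \mu_\alpha \to 0$ in $\tau_{_{AP(K)}}$ for every $\mu \in P(K)$. Given a triple $(\pi, E, F)$ as in $(FP)$, pick $x_0 \in F$; by compactness of $F$, some subnet $y_\beta := \pi_{\mu_\beta}(x_0)$ converges to $y^* \in F$. For each $\mu \in P(K)$, continuity of $\pi_\mu$ in the second variable together with the semihypergroup relation yields $\pi_\mu(y^*) = \lim_\beta \pi_{\mu*\mu_\beta}(x_0)$, so $\pi_\mu(y^*) - y^* = \lim_\beta \pi_{\mu*\mu_\beta - \mu_\beta}(x_0)$ by bilinearity of $\pi$ in its first argument. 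Since $E^*$ separates points of the Hausdorff space $E$, it suffices to verify that $\lambda(\pi_{\mu*\mu_\beta-\mu_\beta}(x_0)) \to 0$ for every $\lambda \in E^*$. Introducing $\phi_\lambda(s) := \lambda(\pi_{p_s}(x_0))$ and using the integral representation $\lambda(\pi_\nu(x_0)) = \int \phi_\lambda\, d\nu$ (valid on $M_F^+(K)$ and extended by bilinearity and density in the cone topology), this reduces to $\int \phi_\lambda\, d(\mu*\mu_\beta-\mu_\beta) \to 0$, which holds by the choice of $\{\mu_\beta\}$ \emph{provided} $\phi_\lambda \in AP(K)$.

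The substantive obstacle is therefore to show $\phi_\lambda \in AP(K)$ for each $\lambda \in E^*$. Continuity of $\phi_\lambda$ is immediate since $s \mapsto p_s : K \to (P(K),\tau_{_{AP(K)}})$ is continuous and $\tilde\pi$ is separately $\tau_{_{AP(K)}}$-continuous; boundedness follows from compactness of $F$. What must be shown is that the left orbit $\{L_x \phi_\lambda : x \in K\}$ is relatively compact in $(C(K), \|\cdot\|_u)$. Writing $L_x \phi_\lambda(y) = \lambda(\pi_{p_x}(\Theta(y)))$ with $\Theta(y) := \pi_{p_y}(x_0) \in F$, and noting that $\Theta(K) \subseteq F$, the task reduces via Ascoli--Arzel\`a on compact $F$ to verifying that the family $\{\lambda \circ \pi_{p_x}|_F : x \in K\}\subseteq C(F)$ is uniformly bounded — immediate from $\pi_{p_x}(F)\subseteq F$ — and equicontinuous on $F$. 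Equicontinuity is the crux, and the plan is to extract it by combining the pointwise precompactness of $\{\pi_{p_x}|_F\}$ (via Tychonoff applied to $F^F$), the separate $\tau_{_{AP(K)}}$-continuity hypothesis, and the coincidence of the weak and original topologies on compact convex subsets of a Hausdorff locally convex space. Once $\phi_\lambda \in AP(K)$ is in hand, the argument closes as outlined, giving $\pi_\mu(y^*) = y^*$ for every $\mu \in P(K)$ and hence the desired fixed point.
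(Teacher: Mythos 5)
Your $(\Leftarrow)$ direction is essentially the paper's own argument: same choice $E=AP(K)^*$ with the weak$^*$-topology, same $F=\mathcal{M}(AP(K))$, same action $\pi_\mu(\phi)=\ep_\mu\star\phi$, and the same specialization to $\mu=p_x$ at the end. Your use of right introversion and Arens regularity to get weak$^*$-continuity in the second variable is a clean substitute for the paper's equicontinuity computation for $\Omega_{P(K)}$; that half is fine. The gap is in $(\Rightarrow)$. After extracting $y_\beta=\pi_{\mu_\beta}(x_0)\to y^*$ you reduce everything to the claim that $\phi_\lambda(s):=\lambda(\pi_{p_s}(x_0))$ lies in $AP(K)$ for every $\lambda\in E^*$, you correctly identify this as the substantive obstacle, and then you do not close it --- and the route you sketch cannot close it. Relative norm-compactness of $\{L_x\phi_\lambda : x\in K\}$ amounts to equicontinuity of the family $\{\lambda\circ\pi_{p_x}\}$ on the compact set $\overline{\{\pi_{p_y}(x_0):y\in K\}}\subseteq F$, and equicontinuity does not follow from pointwise precompactness: Tychonoff gives cluster points in $F^F$, which is the wrong direction in Ascoli--Arzel\`a. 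Nor does it follow from separate continuity or from affineness: a uniformly bounded family of weakly continuous affine functions on a weakly compact convex set need not be equicontinuous (the coordinate functionals $x\mapsto\langle x,e_n\rangle$ on the unit ball of $\ell^2$ with the weak topology fail equicontinuity at $0$). So the one step your whole forward direction hinges on is unproven, and no generic functional-analytic fact will supply it; you would have to wring it out of the specific continuity clause of $(FP)$, which you have not done.

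The detour is also unnecessary, and this is where your proof diverges from the paper's. Property $(FP)$ hands you separate $\tau_{_{AP(K)}}$-continuity of $\tilde{\pi}$ in its \emph{first} variable, and the paper uses exactly that: since $\mu*\mu_\beta-\mu_\beta\to 0$ in $\sigma(M(K),AP(K))$ and $\nu\mapsto\pi_\nu(x_0)$ is linear, first-variable continuity gives $\pi_{\mu*\mu_\beta-\mu_\beta}(x_0)\to\pi_0(x_0)=0$, whence $\pi_\mu(y^*)=\lim_\beta\pi_{\mu*\mu_\beta}(x_0)=\lim_\beta\pi_{\mu_\beta}(x_0)=y^*$. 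Your $\phi_\lambda$-route, had it worked, would amount to re-deriving that continuity from $AP(K)$-membership of the matrix coefficients $s\mapsto\lambda(\pi_{p_s}(x_0))$, which is a strictly stronger statement than the hypothesis provides. (Both routes, incidentally, must invoke first-variable behaviour at measures outside $P(K)$, namely at $\mu*\mu_\beta-\mu_\beta$; the paper does this silently via bilinearity, and your reduction to $\int\phi_\lambda\,d(\mu*\mu_\beta-\mu_\beta)$ presupposes the same extension.) To repair the argument, delete the $AP(K)$-membership claim and argue directly from the continuity and bilinearity assumptions in $(FP)$, as the paper does.
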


\begin{proof}
First, let $m$ be a left-invariant mean on $AP(K)$, and $\pi: M(K)\times E \ra E$ be a semihypergroup action of $K$ on a separated locally convex space $(E, Q)$. Further, let $F$ be a compact convex subset of $E$ such that $E$ is $P(K)$-invariant under $\pi$ such that the induced action $\tilde{\pi}:=\pi{|}_{_{P(K)\times F}}: P(K)\times F \ra F $ is separately $\tau_{_{AP(K)}}$-continuous.

Since $m$ is a LIM on $AP(K)$, it follows from \Cref{thmequiv} that there exists a net of measures $\{\mu_\alpha\}_{\alpha\in I}$ in $P(K)$ such that $\{(\mu * \mu_\alpha - \mu_\alpha)\}$ converges to $0$ in $(M(K),  \tau_{_{AP(K)}})$ for each $\mu \in P(K)$.

Pick some $x_0\in F$, and consider the net $\{\tilde{\pi}(\mu_\alpha, x_0)\}_{\alpha\in I} = \{\tilde{\pi}_{_{\mu_\alpha}}(x_0)\}_{\alpha\in I}$ in $F$. Since $F$ is compact, we will get a subnet $\{\tilde{\pi}_{_{\mu_\beta}}(x_0)\}_{\beta\in I'}$, $I'\subseteq I$, such that $\tilde{\pi}_{_{\mu_\beta}}(x_0) \ra z_0$ in $(E, Q)$ for some $z_0\in F$. Thus for each $\mu\in P(K)$ we have the following:
\begin{eqnarray*}
	\tilde{\pi}(\mu, z_0) &=& \tilde{\pi} \big(\mu,  \ \lim_\beta \tilde{\pi}(\mu_\beta, x_0)\big)\\
	&=& \lim_\beta \tilde{\pi} (\mu * \mu_\beta, x_0)\\
	&=& \lim_\beta \tilde{\pi} \big((\mu * \mu_\beta - \mu_\beta) + \mu_\beta, x_0)	\\
		&=& \lim_\beta \tilde{\pi} ((\mu * \mu_\beta - \mu_\beta), x_0) +  \lim_\beta \tilde{\pi} \big( \mu_\beta, x_0)\\
		&=&  \tilde{\pi} \big(\lim_\beta (\mu * \mu_\beta - \mu_\beta), x_0\big) +  \lim_\beta \tilde{\pi} \big( \mu_\beta, x_0)\\
		&=& \tilde{\pi}(0, x_0) + \lim_\beta \tilde{\pi}_{_{\mu_\beta}}(x_0) \ = \ z_0,
	\end{eqnarray*}
where the second and fifth equality holds since the action $\tilde{\pi}$ is $\tau_{_{AP(K)}}$-continuous, and the fourth  equality follows from the bilinearity (\Cref{action1}) of $\tilde{\pi}$. Hence in other words, we see that $\pi$ satisfies Property $(FP)$.

Conversely, assume that any semihypergroup action $\pi$ of $K$ on a separated locally convex space $(E, Q)$ satisfies Property $(FP)$. In particular, we set $E= AP(K)^*$, and equip it with the weak$^*$-topology. We define a map $\pi:M(K)\times AP(K)^* \ra AP(K)^*$ by $$\pi(\mu, \phi) = \pi_\mu(\phi) := \ep_\mu \star \phi,$$
for any $\mu \in M(K), \phi\in AP(K)^*$. First note that $\pi$ is bilinear since $(AP(K)^*, \star)$ is an algebra, and $\ep_{\mu + \nu} = \ep_\mu + \ep_\nu$ for any $\mu, \nu \in M(K)$ by construction. In addition, $\pi$ is indeed a semihypergroup action of $K$ on $AP(K)^*$ since for any $\mu, \nu\in M(K)$ and $\phi\in AP(K)^*$ we have that 
\begin{eqnarray*}
	 \pi_{_{(\mu*\nu)}} (\phi) &=& \ep_{_{(\mu*\nu)}} \star \phi \\
	 &=& (\ep_\mu \star \ep_\nu) \star \phi \\
	 &=& \ep_\mu \star (\ep_\nu \star \phi) \ = \  \pi_\mu(\ep_\nu\star \phi) \ = \ (\pi_\mu \circ \pi_\nu) (\phi),
\end{eqnarray*}
where the second and third  equalities  follow from \Cref{remw} and associativity of Arens product, respectively. Now, set $F:= \mathcal{M}(AP(K))$. Since $\mathcal{M}(AP(K))$ is weak$^*$-closed in $\mathcal{B}_1(AP(K)^*)$, we have that $F$ is a compact convex subset of $E$. Moreover,  for each $\mu\in P(K)$, $\omega\in F$, since $\pi_\mu \in L(E)$, we have that
$$||\pi_\mu(\omega)|| = ||\ep_\mu \star \omega|| \leq ||\ep_\mu|| \: ||\omega|| = ||\mu|| \ ||\omega|| = 1 $$
Since $AP(K)$ contains the constant function $1$, we have that $ \pi_\mu(\omega) (1) = 1 = ||\pi_\mu(\omega)||$, i.e, $F$ is $P(K)$-invariant under the action $\pi$. Now consider the induced action $\tilde{\pi}:=\pi{|}_{_{P(K)\times F}}: P(K)\times F \ra F$. Note that the action $\tilde{pi}$ coincides with the map $\Omega_{P(K)}$ defined in the proof of \Cref{thmequiv}, and hence is separately $\big{(} \tau_{_{AP(K)}} \times \mbox{ weak}^*\mbox{-topology}\big{)}- \mbox{ weak}^*\mbox{-topology}$ continuous. 

Since $\pi$ satisfies Property $(FP)$,  $\tilde{\pi}$ has a fixed point $ m\in F= \mathcal{M}(AP(K))$. Hence for each $\mu\in P(K)$ we have: $$ m= \tilde{\pi}_\mu(m) = \ep_\mu \star m .$$
Thus $m$ is a LIM on $AP(K)$ by \Cref{thmequiv}.
\end{proof}


\section*{Acknowledgement}

\noi The author would like to sincerely thank her doctoral thesis advisor  Dr. Anthony To-Ming Lau, for suggesting the topic of this study and for the helpful discussions during the initiation of this work. She would also like to gratefully acknowledge the financial support provided by the Indian Institute of Technology Kanpur, India, for the course of this work. 

\end{document}